\DeclareMathOperator{\eco}{e-conv}					
\DeclareMathOperator{\conv}{conv}
\DeclareMathOperator{\cl}{cl}
\DeclareMathOperator{\epco}{\rm{e}^{\prime} -conv} 	
\DeclareMathOperator{\ep}{\text{e}^{\prime}} 	
\DeclareMathOperator{\dom}{dom}
\DeclareMathOperator{\epi}{epi}
\DeclareMathOperator{\grh}{grh}
\DeclareMathOperator{\rec}{rec}
\newtheorem{definition}{Definition}[section]
\newtheorem{theorem}{Theorem}[section]
\newtheorem{remark}{Remark}[section]
\newtheorem{proposition}{Proposition}[section]
\newtheorem{corollary}{Corollary}[section]
\newtheorem{lemma}{Lemma}[section]
\newtheorem{example}{Example}[section]
\def\supp{\mathop{\rm sup}}	   	
\def\inff{\mathop{\rm inf}}
\newcommand{\en}{\rightarrow}											
\newcommand{\R}{\mathbb{R}}												
\newcommand{\N}{\mathbb{N}}												
\newcommand{\Ramp}{\overline{\R}}					
\newcommand{\ci}{\left\langle}										
\newcommand{\cd}{\right\rangle}										
\DeclareMathOperator*\czp{\cap}%
\begin{document}

\title{New Duality Results for Evenly Convex Optimization Problems}

\date{}%

\author{Maria Dolores Fajardo\thanks{University of Alicante, Alicante, Spain, md.fajardo@ua.es} \and
Sorin-Mihai Grad\thanks{Corresponding author, Faculty of Mathematics, University of Vienna, Oskar-Morgenstern-Platz 1, 1090 Vienna, Austria, sorin-mihai.grad@univie.ac.at} \and
Jos\'e Vidal\thanks{Faculty of Mathematics, Chemnitz University of Technology, 09107 Chemnitz, Germany, jose.vidal-nunez@mathematik.tu-chemnitz.de}}
\maketitle
\textbf{Abstract.} We present new results on optimization problems where the involved functions are evenly convex.
By means of a generalized conjugation scheme and the perturbation theory introduced by Rockafellar, we propose an alternative dual problem for a general optimization one defined on a separated locally convex topological space. Sufficient conditions for converse and total duality involving the even convexity of the perturbation function and $c$-subdifferentials are given. Formulae for the $c$-subdifferential and biconjugate of the objective function of a general optimization problem are provided, too.
We also characterize the total duality also by means of the saddle-point theory for a notion of Lagrangian adapted to the considered framework.\\

\textbf{MSC Subject Classification.} 52A20, 26B25, 90C25, 49N15.\\
\textbf{Keywords.} Evenly convex function, generalized  convex conjugation, converse duality, total duality, Lagrangian function, convex optimization in locally convex spaces.
\maketitle

\section{Introduction}
An important part of mathematical programming from both theoretical and computational points of view is the duality theory. Rockafellar (cf. \cite{R1970}) developed the well-known perturbational approach (see also  \cite{ET1976}), consisting in the use of a perturbation function as the keystone to obtain a dual problem for a general primal one by means of the Fenchel conjugation. Both problems always verify \emph{weak duality} (the optimal value of the dual problem is less or equal to the optimal value of the primal one), as a direct consequence of the Fenchel-Young inequality, whereas conditions ensuring \emph{strong duality}(no duality gap and dual problem solvable) can be found in many references in the literature. Another related interesting problem in conjugate duality theory is the notion of \emph{converse duality}.
It corresponds to the situation where there is no duality gap and the primal problem is solvable. This issue was investigated in the convex setting in \cite{BW2006} for Fenchel duality and later extended in \cite{LFLL2009}.

In this paper we propose a new dual problem to a general primal one defined  on locally convex spaces by means of a generalized conjugation scheme and study converse duality for this primal-dual pair.
This pattern, which is inspired by a survey done by Mart\'inez-Legaz where generalized convex duality theory is applied to quasiconvex programming, is called \emph{$c$-conjugation scheme} and it was developed in \cite{MLVP2011}. In the same way like in the classical context, convexity and lower semicontinuity of the perturbation function are required in most of the regularity conditions -- see for instance \cite{B2010}, the use of the $c$-conjugation scheme is associated with the \emph{even convexity} of such a function. Evenly convex sets (functions) are a generalization of closed and convex sets (functions), see, for instance \cite{RVP2011}.
Fenchel called in \cite{F1952} a set \emph{evenly convex}, e-convex in brief, if it is the intersection of an arbitrary family, possibly empty, of open halfspaces. Such sets have been employed to study the solvability of semi-infinite linear systems containing infinitely many strict inequalities in \cite{GR2006}, whereas some important properties in terms of their sections and projections are given in \cite{KMZ2007}. Due to \cite{RVP2011}, an extended real valued function defined on a locally convex space is said to be \textit{e-convex} if its epigraph is e-convex. According to \cite{MLVP2011}, the $c$-conjugation scheme is suitable for this class of functions in the sense that the double conjugate function equals the original one if it is proper and e-convex. This result can be seen as the e-convex counterpart of the celebrated Fenchel-Moreau theorem.

The theory developed in \cite{MLVP2011} motivated in \cite{FVR2012} the generalization of some well-known properties of the sum of the epigraphs of two Fenchel conjugate functions and the infimal convolution, and, as an application, conditions for strong Fenchel duality were derived.
Later, in \cite{F2015}, the perturbation approach was used to build a dual problem by means of the $c$-conjugate duality theory, and the counterparts of some regularity conditions, i.e. conditions ensuring strong duality, from the Fenchel conjugate setting were obtained.
Moreover, in \cite{FVR2015} regularity conditions for strong duality between an e-convex problem and its Lagrange dual were established.
In \cite{FV2016SSD} we analyzed the problem of stable strong duality, and deduced Fenchel and Lagrange type duality statements for unconstrained and constrained optimization problems, respectively.
In \cite{FajardoVidal2017} the Fenchel-Lagrange dual problem of a (primal) minimization problem, whose involved functions do not need to be e-convex a priori, was derived.
Furthermore, some relations between the optimal values of the Fenchel, Lagrange and Fenchel-Lagrange dual problems were presented.
Finally, in \cite{FajardoVidal2018} we used the formulation of the Fenchel-Lagrange dual problem from \cite{FajardoVidal2017} to derive a characterization of strong Fenchel-Lagrange duality.

The purpose of the paper is twofold.
First, we analyze the fulfillment of converse duality for a primal-dual pair expressed via the perturbation function, where its even convexity will play a fundamental role.
In order to avoid repetitive arguments, we have decided to derive the aforementioned converse duality in the general form, leaving as an application the natural particularizations into the previously mentioned special cases.
As a second target, we address the problem of \emph{total duality} (no duality gap and both problems solvable) from this general perspective.
Moreover and extending in this way results from the convex setting \cite{BGB,BGS,GWB}, we provide some formulae for the $c$-subdifferential and biconjugate of the objective function of a given general optimization problem. Motivated by \cite{ET1976}, we highlight the analysis of saddle-point theory and its relation with total duality for e-convex problems through the study and application of Lagrangian functions.

The layout of this work is as follows. Section \ref{se2} contains preliminary results on e-convex sets and functions to make the paper self-contained.
Section \ref{se3} is dedicated to sufficient conditions for strong converse duality and biconjugate formulae.
Section \ref{se4} is devoted to new results on $c$-subdifferentials which allows to characterize total duality for a general primal-dual pair. Moreover, the $\varepsilon$-$c$-subdifferential of the objective function of the considered problem is expressed via the $\varepsilon$-$c$-subdifferential of the considered perturbation function. Last but not least in Section \ref{se5} we extend the saddle-point theory from the classical framework to e-convex problems, showing its close relation with total duality and we close the paper with a short section dedicated to some final remarks, conclusions and ideas for future work.

\section{Preliminaries}\label{se2}

Let $X$ be a separated locally convex space, lcs in brief, equipped with the $\sigma(X,X^*)$ topology induced by $X^*$, its continuous dual space endowed with the $\sigma(X^*,X)$ topology.
 The notation $\ci x,x^*\cd$ stands for the value at $x\in X$ of the continuous linear functional $x^*\in X^*$. Let $Y$ be another lcs. By $\R_{++}$ we denote the set of positive real numbers. For a set $D\subseteq X$ we denote its convex hull and its closure by $\conv D$ and $\cl D$, respectively.\\
According to \cite{DML2002}, a set $C\subseteq X$ is \emph{evenly convex}, e-convex in short, if for every point $x_0\notin C$, there exists $x^*\in X^*$ such that $\ci x-x_0,x^*\cd<0$, for all $x\in C$. Furthermore, for a set $C\subseteq X$, the \emph{e-convex hull} of $C$, $\eco C$, is the smallest e-convex set in $X$ containing $C$. For a convex subset  $C\subseteq X$, it always holds $C\subseteq \eco C \subseteq \cl C$.
This operator is well defined because the class of e-convex sets is closed under arbitrary intersections.
Since $X$ is a separated lcs, $X^*\neq\emptyset$. As a consequence of Hahn-Banach theorem, it also holds that $X$ is e-convex and every closed or open convex set is e-convex as well.

For a function $f:X\en \Ramp$, we denote by
	$\dom f=\{ x\in X$ : $ f(x)<+\infty \}$
the \emph{effective domain} of $f$ and by $\epi f = \left\{ \left( x,r\right) \in X\times \R \, : \, f(x)\leq r\right\}$ and $\grh f  = \left\{ \left( x,r\right) \in X\times \R \, : \, f(x)= r\right\}$ its \emph{epigraph} and its \emph{graph}, respectively. We say that $f$ is \emph{proper} if $\epi f$ does not contain vertical lines, i.e. $f(x)>-\infty$ for all $x\in X$, and  $\dom f\neq \emptyset$.
By $\cl f$ we denote the \emph{lower semicontinuous hull} of $f$, which is the function whose epigraph equals $\cl(\epi f)$.
A function $f$ is \emph{lower semicontinuous}, lsc in brief,  if for all $x\in X$, $f(x)=\cl f(x)$, and \textit{e-convex} if $\epi f$ is e-convex in $X\times \R$.
Clearly, any lsc convex function is e-convex, but the converse does not hold in general as one can see in \cite[Ex.~2.1]{FajardoVidal2017}.

The \emph{e-convex hull} of a function $f:X\rightarrow \overline{\R}$, $\eco f$, is defined as the largest e-convex minorant of $f$.
Based on the generalized convex conjugation theory introduced by Moreau \cite{Mor1970}, a suitable conjugation scheme for e-convex functions is provided in \cite{MLVP2011}.
Let us consider the space $W:=X^{\ast }\times X^{\ast }\times \R$ with \emph{the coupling functions }$c:X\times W\rightarrow \overline{\R}$ and $c^{\prime }:W\times X\rightarrow \overline{\R}$ given by
\begin{equation} \label{equation: def c and c prime}
c(x,(x^{\ast },y^{\ast },\alpha ))=c^{\prime }\left( (x^{\ast },u^{\ast},\alpha ),x\right) :=\left\{
\begin{array}{ll}
\left\langle x,x^{\ast }\right\rangle & \text{if }\left\langle x,u^{\ast}\right\rangle <\alpha, \\
+\infty & \text{otherwise.}
\end{array}
\right.
\end{equation}
Given two functions $f:X\rightarrow \overline{\R}$ and $g:W\rightarrow \overline{\R}$, the \emph{$c$-conjugate} of $f$, $f^{c}:W\rightarrow \overline{\R}$, and the \emph{c$^{\prime }$-conjugate} of $g$, $g^{c^{\prime}}:X\rightarrow \overline{\R}$, are defined
\begin{eqnarray*}
f^{c}(x^{\ast },u^{\ast },\alpha)& :=& \supp_{x\in X}\left\{ c(x,(x^{\ast},u^{\ast },\alpha ))-f(x)\right\},\\
 g^{c^{\prime }}(x) & :=& \supp_{(x^{\ast },u^{\ast },\alpha )\in W}\left\{c^{\prime }\left( (x^{\ast },u^{\ast },\alpha ),x\right) -g(x^{\ast},u^{\ast },\alpha )\right\},
\end{eqnarray*}
 with the conventions $\left( +\infty \right) +\left( -\infty \right)=\left( -\infty \right) +\left( +\infty \right) =\left( +\infty \right)-\left( +\infty \right)$ $=\left( -\infty \right) -\left( -\infty \right)=-\infty.$

Functions of the form $x \in X \en c(x,(x^{\ast},u^{\ast },\alpha))-\beta \in \Ramp$, with $(x^{\ast},u^{\ast },\alpha)\in W$ and $\beta \in \R$ are called \emph{$c$-elementary}, and, in a similar way, functions of the form $(x^{\ast},u^{\ast },\alpha)\in W \en  c(x,(x^{\ast},u^{\ast },\alpha))-\beta \in \Ramp$ with  $x \in X $ and $\beta \in \R$ are called \emph{$c^{\prime}$-elementary}.
In \cite{MLVP2011} it is shown that the familiy of proper e-convex functions from $X$ to $\R$ along with the function identically equal to $+\infty$ is actually the family of pointwise suprema of sets of $c$-elementary functions.
Similarly, a function $g:W \en \Ramp$ is \emph{e$^{\prime}$-convex} if it is the pointwise supremum of sets of $c^{\prime}$-elementary functions, and the \emph{e$^{\prime}$-convex hull} of an extended real function $g$ defined on $W$, denoted by ${\epco}g$,  is the largest e$^{\prime}$-convex minorant of it. Moreover, a set $D \subset W \times \R$ is e$^{\prime}$-convex if there exists an e$^{\prime}$-convex function $g$ such that $\epi g=D$. The e$^{\prime}$-convex hull of a set $D \subset W \times \R$, denoted by $\epco D$, is the smallest e$^{\prime}$-convex set containing $D$.

The following counterpart of the Fenchel-Moreau theorem for e-convex and e$^{\prime}$-convex functions was shown in \cite[Prop.~6.1,~Prop.~6.2,~Cor.~6.1]{ML2005}.
\begin{theorem}\label{th:Theorem2.2}
Let $f:X\en \R\cup \left\{+\infty\right\}$ and $g:W \en \Ramp$. Then
\begin{enumerate}
	\item[i)] $f^{c}$ is e$^{\prime}$-convex; $g^{c^{\prime }}$ is e-convex.
	\item[ii)] if $f$ has a proper e-convex minorant ${\eco}f=f^{c{c^{\prime}}}$; ${\epco}g=g^{{c^{\prime}}c}$.
	\item[iii)] $f$ is e-convex if and only if $f^{c c^{\prime}}=f$; $g$ is e$^{\prime}$-convex if and only if  $g^{ c^{\prime} c}=g$.
	\item[iv)] $f^{cc^{\prime}} \leq f$; $g^{c^{\prime} c} \leq g$.
\end{enumerate}
\end{theorem}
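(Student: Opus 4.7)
The plan is to prove the four items in the order (i), (iv), (iii), (ii), since each step feeds into the next. For (i), I observe that for every fixed $x \in \dom f$ the mapping $(x^*, u^*, \alpha) \mapsto c(x, (x^*, u^*, \alpha)) - f(x)$ is precisely a $c'$-elementary function (with $\beta = f(x)$), so $f^c$ is a pointwise supremum of $c'$-elementary functions, and therefore $e'$-convex by the definition recalled just before the theorem. The argument for $g^{c'}$ is symmetric, viewing $(x\mapsto c(x,w) - g(w))$ as a $c$-elementary minorant for each $w \in W$. For (iv), the generalized Fenchel--Young inequality $c(x, w) - f(x) \leq f^c(w)$ holds for all $x$ and $w$ directly from the definition of the $c$-conjugate; rearranging as $c'(w, x) - f^c(w) \leq f(x)$ (which is legitimate thanks to the arithmetic conventions fixed after the definition of the conjugates) and taking the supremum over $w \in W$ gives $f^{cc'} \leq f$, and the inequality $g^{c'c} \leq g$ is obtained analogously.

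For (iii), the forward direction uses the characterization recalled in the paragraph preceding the theorem: a proper $e$-convex function $f$ is the pointwise supremum of a family $\{c(\cdot, w_i) - \beta_i\}_{i \in I}$ of $c$-elementary minorants. Each inequality $c(\cdot, w_i) - \beta_i \leq f$ is equivalent, by the definition of $c$-conjugate, to $f^c(w_i) \leq \beta_i$, whence
\begin{equation*}
f^{cc'}(x) \;\geq\; \sup_{i \in I}\bigl\{c(x, w_i) - f^c(w_i)\bigr\} \;\geq\; \sup_{i \in I}\bigl\{c(x, w_i) - \beta_i\bigr\} \;=\; f(x),
\end{equation*}
which together with (iv) gives $f = f^{cc'}$. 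The converse is immediate from (i): if $f = f^{cc'}$, then $f$ is already a $c'$-conjugate, hence $e$-convex. The claim for $g$ is handled in parallel, swapping the roles of $c$ and $c'$ and invoking the dual representation of $e'$-convex functions as suprema of $c'$-elementary functions.

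For (ii), combining (iv) with (i) shows that $f^{cc'}$ is an $e$-convex minorant of $f$; to see it is the largest, let $h$ be any proper $e$-convex minorant of $f$. The order-reversing property of the $c$-conjugate, which follows directly from $-h(x) \geq -f(x)$ pointwise inside the supremum defining $h^c$, yields $h^c \geq f^c$, hence $h^{cc'} \leq f^{cc'}$; applying (iii) to $h$ then gives $h = h^{cc'} \leq f^{cc'}$, so indeed $f^{cc'} = \eco f$. The formula $\epco g = g^{c'c}$ follows analogously, swapping the roles of $X$ and $W$. The main obstacle throughout is not a deep idea but a careful bookkeeping of the extended arithmetic conventions stated after the conjugate definitions, since the coupling $c$ takes the value $+\infty$ outside an open halfspace and several of the inequalities above involve subtractions of possibly infinite quantities that must be read through the rules for indeterminate forms rather than as ordinary real arithmetic.
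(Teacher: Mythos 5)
The paper contains no proof of this theorem to compare yours against: it is quoted directly from Mart\'inez-Legaz \cite{ML2005} (Prop.~6.1, Prop.~6.2, Cor.~6.1), where it arises as an instance of Moreau's generalized conjugation theory for the particular coupling $c$ of \eqref{equation: def c and c prime}. That said, your argument is essentially the standard proof underlying the cited results: (i) by writing $f^{c}$ as a pointwise supremum of $c^{\prime}$-elementary functions indexed by $\dom f$ (and symmetrically for $g^{c^{\prime}}$), (iv) by the generalized Fenchel--Young inequality read through the stated conventions, (iii) forward via the representation of proper e-convex functions (together with the function identically $+\infty$) as suprema of $c$-elementary minorants and backward via (i), and (ii) by order-reversal of conjugation combined with (iii). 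The logical order (i), (iv), (iii), (ii) is the natural one and the key steps are sound.

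Two small points deserve patching. First, in (ii) you show that $f^{cc^{\prime}}$ dominates every \emph{proper} e-convex minorant of $f$, which is not literally the statement that it equals the \emph{largest} e-convex minorant; close the gap by observing that under the hypothesis either $f\equiv +\infty$ (in which case $\eco f=f^{cc^{\prime}}=+\infty$ trivially) or $\eco f$ is itself proper --- it majorizes a proper minorant, so it never takes $-\infty$, and $\dom f\subseteq\dom \eco f\neq\emptyset$ --- so (iii) can be applied directly to $h=\eco f$. Second, the improper edge cases should be dispatched explicitly rather than implicitly: if $\dom f=\emptyset$ then $f^{c}\equiv-\infty$ and one must invoke the convention on (possibly empty) families of elementary functions to call it e$^{\prime}$-convex, and if $g$ takes the value $-\infty$ somewhere then $g^{c^{\prime}}\equiv+\infty$ and $g^{c^{\prime}c}\equiv-\infty\leq g$, so (i) and (iv) still hold; your appeal to ``careful bookkeeping of the conventions'' is correct, but these are exactly the cases where that bookkeeping does some work. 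Neither issue is conceptual; both are routine additions.
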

The following lemma and proposition were shown in the finitely dimensional case in \cite{GJR2003} and  \cite{RVP2011}, respectively, and can be generalized for infinitely dimensional spaces easily. Recall that the \emph{recession cone} of a nonempty subset $A \subseteq X$ is defined by $\rec A= \left \{ u \in X\, : \, a+u \in A, \text{ for all }  a \in A \right \}$.
\begin{lemma} \label{lemma:lemma 2.3}
Let $C \subseteq X$ be a nonempty e-convex set and $y \in X$ such that there exists $x_{0} \in X$ verifying $x_{0} +\lambda y \in C$, for all $\lambda \geq 0$. Then $y \in \rec C$.
\end{lemma}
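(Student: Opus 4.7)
The plan is to argue by contradiction using the defining property of e-convex sets, namely that through any point outside such a set there passes an open halfspace separating it from the set. I would fix an arbitrary $a \in C$ and $\lambda_{0} \geq 0$ and aim to show $a + \lambda_{0} y \in C$; this is exactly what ``$y \in \rec C$'' requires.

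First, suppose toward a contradiction that $a + \lambda_{0} y \notin C$. Since $C$ is e-convex, there exists $x^{\ast} \in X^{\ast}$ such that
\[
\langle x - (a + \lambda_{0} y), x^{\ast} \rangle < 0 \quad \text{for all } x \in C.
\]
Plugging $x = a \in C$ gives $-\lambda_{0}\langle y, x^{\ast}\rangle < 0$, which forces $\lambda_{0} > 0$ and $\langle y, x^{\ast}\rangle > 0$. So the separating functional is strictly positive on the recession direction $y$.

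Next, I use the hypothesis that the whole ray $\{x_{0} + \lambda y : \lambda \geq 0\}$ lies in $C$. Substituting $x = x_{0} + \lambda y$ into the separation inequality yields
\[
\langle x_{0} - a, x^{\ast}\rangle + (\lambda - \lambda_{0})\langle y, x^{\ast}\rangle < 0 \quad \text{for all } \lambda \geq 0.
\]
Since $\langle y, x^{\ast}\rangle > 0$, the left-hand side tends to $+\infty$ as $\lambda \to +\infty$, which is the desired contradiction. Hence $a + \lambda_{0} y \in C$, proving $y \in \rec C$.

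I do not expect any serious obstacle: the only subtle point is noticing that even though e-convexity gives only an open (not closed) halfspace separation, this is exactly what the argument needs, because we obtain a strict inequality on the entire ray and can drive $\lambda$ to infinity. No closure or continuity considerations enter, and the proof works verbatim in an arbitrary lcs, which is precisely why the finite-dimensional statement from \cite{RVP2011} generalizes without change.
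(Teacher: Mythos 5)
Your proof is correct. Note that the paper itself gives no proof of this lemma: it only cites the finite-dimensional result from \cite{GJR2003} and asserts that the extension to an arbitrary lcs is easy, and your separation argument is exactly that straightforward extension — assume $a+\lambda_0 y\notin C$, use the defining property of even convexity to get $x^*\in X^*$ with $\langle x-(a+\lambda_0y),x^*\rangle<0$ on $C$, deduce $\langle y,x^*\rangle>0$ by testing at $x=a$, and contradict the inequality along the ray $x_0+\lambda y\in C$ by letting $\lambda\to+\infty$. The only cosmetic remark is that, with the paper's definition of $\rec C$, membership $y\in\rec C$ only requires $a+y\in C$ for every $a\in C$ (i.e.\ the case $\lambda_0=1$); you prove the stronger statement that the whole ray $a+\lambda_0 y$, $\lambda_0\ge 0$, stays in $C$, which of course implies it.
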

\begin{proposition}\label{Proposition2.4}
Let $C \subseteq X \times \R$ be a nonempty e-convex set such  that $(0,1) \in \rec C$. Then  $h(x)= \inf \left \lbrace a \in \R \,:\,(x,a) \in C \right \rbrace$  is an e-convex function and $\epi h=C \cup \grh h$.
\end{proposition}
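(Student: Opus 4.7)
The plan is to prove the epigraph identity first, and then use it together with the e-convexity of $C$ to separate points outside $\epi h$ by continuous linear functionals.

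First I would establish $\epi h = C \cup \grh h$. The inclusion $C \cup \grh h \subseteq \epi h$ is immediate from the definitions of $h$ and $\grh h$. For the reverse inclusion, take $(x,r)\in\epi h$, so $h(x)\leq r$. If $h(x)=r$ then $(x,r)\in\grh h$. If $h(x)<r$, then by the definition of the infimum there is $a\in\R$ with $(x,a)\in C$ and $a\leq r$. Since $(0,1)\in\rec C$, we get $(x,a)+(r-a)(0,1)=(x,r)\in C$. The same recession argument shows moreover that for every $x\in\dom h$ and every $\eps>0$ one has $(x,h(x)+\eps)\in C$, which I will need repeatedly.

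Next I would prove that $\epi h$ is e-convex by separating any $(x_0,r_0)\notin\epi h$ from $C\cup\grh h$ by a continuous linear functional on $X\times\R$. Since $(x_0,r_0)\notin\epi h$ means $h(x_0)>r_0$, I pick $r_0'\in(r_0,h(x_0))$ (any $r_0'>r_0$ if $h(x_0)=+\infty$); then $(x_0,r_0')\notin C$. Applying the definition of e-convexity to $C$ yields $(\xi^{*},\eta)\in X^{*}\times\R$ such that
\begin{equation*}
\langle x-x_0,\xi^{*}\rangle+\eta(a-r_0')<0 \quad\text{for all }(x,a)\in C.
\end{equation*}
Feeding $(x,a+\lambda)\in C$ for $\lambda\to+\infty$ into this inequality forces $\eta\leq 0$; set $\eta=-t$ with $t\geq 0$. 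For any $(x,h(x))\in\grh h$ the approximating points $(x,h(x)+\eps)\in C$ plugged into the inequality and letting $\eps\downarrow 0$ give the weak inequality $\langle x-x_0,\xi^{*}\rangle-t(h(x)-r_0')\leq 0$.

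The main (and only) delicate step is to upgrade this to strict separation of $(x_0,r_0)$ from all of $\epi h$. Writing
\begin{equation*}
\langle x-x_0,\xi^{*}\rangle-t(a-r_0)=\bigl[\langle x-x_0,\xi^{*}\rangle-t(a-r_0')\bigr]-t(r_0'-r_0),
\end{equation*}
I see that when $t>0$ the extra term $-t(r_0'-r_0)<0$ produces strict inequality on both $C$ and $\grh h$. The case $t=0$ must be handled separately: then $\langle x-x_0,\xi^{*}\rangle<0$ for every $(x,a)\in C$; if $h(x_0)<+\infty$, choosing $x=x_0$ with some $a$ such that $(x_0,a)\in C$ would give $0<0$, so necessarily $h(x_0)=+\infty$, hence $x_0\notin\dom h$. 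For every $(x,a)\in\epi h=C\cup\grh h$ one has $x\in\dom h$, and in this case $\langle x-x_0,\xi^{*}\rangle<0$ already supplies strict separation (independently of the height coordinate). Combining the two cases proves that $\epi h$ is e-convex, which by definition means that $h$ is an e-convex function, completing the proof.
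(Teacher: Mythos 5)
Your proof is correct, but there is no in-paper argument to compare it with: the paper states this proposition without proof, attributing the finite-dimensional case to \cite{RVP2011} (and the accompanying lemma to \cite{GJR2003}) and asserting that the infinite-dimensional generalization is easy. Your argument supplies exactly that generalization, and it is sound in any separated lcs: the identity $\epi h=C\cup\grh h$ follows from the definition of $h$ and the vertical recession direction; for even convexity you separate $(x_0,r_0)\notin\epi h$ by first separating the intermediate point $(x_0,r_0')$, $r_0<r_0'<h(x_0)$, from $C$, force the vertical coefficient $\eta=-t\le 0$ by pushing points of $C$ upwards, pass the inequality to $\grh h$ through the approximating points $(x,h(x)+\eps)\in C$ (weakly), and then restore strictness at height $r_0$ either from the slack $t(r_0'-r_0)>0$ when $t>0$, or, when $t=0$, from the horizontal inequality alone, which holds at every $x\in\dom h$ because some point of $C$ lies above $x$. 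This is the standard mechanism of the cited finite-dimensional proof, so your route is best described as a faithful reconstruction that makes the paper's ``easily generalized'' claim explicit rather than a genuinely different approach. Two minor points you leave implicit, both routine: (i) the paper's definition of $\rec C$ only gives $(x,a)+(0,1)\in C$ directly, while you repeatedly use the whole ray $(x,a)+\lambda(0,1)\in C$ for $\lambda\ge 0$; this follows because e-convex sets are convex; (ii) in the case $t=0$ the digression showing $x_0\notin\dom h$ is superfluous, since the strict horizontal inequality at points of $\dom h$ is all that is needed.
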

For convenience in Section 3, we need to ensure the fact that certain e-convex sets in  $X \times \R$ with $(0,1) \in \rec C$, must be epigraphs of e-convex functions.
This requirement will be fulfilled via the following definition.
\begin{definition}
A non-empty e-convex set $C \subseteq X \times \R$ with $(0,1) \in \rec C$ is \emph{functionally representable} if $\grh h \subseteq C$, for $h(x)= \inf  \{ a \in \R \,:$ $(x,a) \in C \}$.
\end{definition}
\begin{remark}\label{rem:Remark2.6}
A non-empty e-convex set $C \subseteq X \times \R$ with $(0,1) \in \rec C$ is functionally representable if $\epi h = C$, where  $h(x)= \inf \left \lbrace a \in \R \,:\,(x,a) \in C \right \rbrace$.
\end{remark}

\section{Converse Duality and Biconjugation}
\label{se3}

Let $\Phi:X\times Y \rightarrow \overline{\R}$ be a perturbation function for $(GP)$ and $0\in\Pr_Y (\dom \Phi)$.
Its $c$-conjugate $\Phi ^{c}:(X^* \times Y^*)\times (X^* \times Y^*)  \times \R \rightarrow \overline{\R}$, is defined by
\begin{equation*}
\Phi ^{c}(( x^*,y^*),( u^*,v^*),\alpha) =\sup_{( x,y) \in X \times Y}\left\{ \bar{c}%
\left( \left( x,y\right),\left( \left( x^{\ast },y^{\ast }\right),\left(
u^{\ast },v^{\ast }\right),\alpha \right) \right) -\Phi \left( x,y\right)
\right\},
\end{equation*}%
where $\bar{c}:(X \times Y)\times (X^* \times Y^*)\times (X^* \times Y^*)\times \R\rightarrow
\overline{\R}$ is%
\begin{equation} \label{equation:def c bar}
\overline{c}\left( \left( x,y\right) ,\left( \left( x^{\ast },y^{\ast
}\right) ,\left( u^{\ast },v^{\ast }\right) ,\alpha \right) \right) =\left\{
\begin{array}{ll}
\left\langle x,x^{\ast }\right\rangle +\left\langle y,y^{\ast }\right\rangle
& \text{if }\left\langle x,u^{\ast }\right\rangle +\left\langle y,v^{\ast
}\right\rangle <\alpha , \\
+\infty & \text{otherwise.}%
\end{array}%
\right.
\end{equation}%
Also in this point we recall the associated coupling function to $\bar c$, which will be used in Section 3.2, $\bar{c}^\prime: (X^* \times Y^*)\times (X^* \times Y^*)\times \R \times(X \times Y)\rightarrow \overline{\R}$ ,
\begin{equation} \label{equation:def c bar prime}
\bar{c}^\prime (((x^*, y^*), (u^*,v^*) ,\alpha),(x,y))=\bar c ((x,y),((x^*, y^*), (u^*,v^*) ,\alpha)).
\end{equation}
Proceeding along the lines of \cite{FVR2012}, we consider the following primal-dual pair of problems, which verify weak duality,%
\begin{equation*}
	\begin{aligned}
		(GP)&\inf & \Phi(x,0) \nonumber\\
		& s.t. & x\in X,
	\end{aligned}
	\hspace{1cm}
	\begin{aligned}
		(GD_{c})   \sup \hspace{0.3cm}& \big\{-\Phi ^{c}\left( \left( 0,y^{\ast }\right), \left( 0,v^{\ast }\right) ,\alpha \right)\big\}& \\
		 s.t.\hspace{0.3cm} & y^*,v^*\in Y ^*, \alpha >0.&
	\end{aligned}
\end{equation*}
This dual problem can also be expressed via the \emph{infimum value function} $p:Y \rightarrow \overline{\R}$, defined by
$p\left( y\right) :=\inf_{x\in X}\Phi \left( x,y\right)$, in the following way
\begin{eqnarray*}
(GD_{c}) & \sup & \{ -p^{c}\left( y^*,v^*,\alpha \right)\}\\
& s.t. & y^{\ast },v^{\ast }\in Y ^{\ast }, \alpha >0.
\end{eqnarray*}%

From $(GD_{c})$ one can derive the minimization (primal) problem
\begin{eqnarray*}
(\overline{GP}_c) &  \inf & \Phi ^{c}\left( \left( 0,y^{\ast }\right)
,\left( 0,v^{\ast }\right) ,\alpha \right) \\
& s.t. & y^{\ast },v^{\ast }\in Y ^{\ast }, \alpha >0.
\end{eqnarray*}
and under appropiate regularity conditions, the optimal value of $(\overline{GP}_c)$ is equal to the optimal value of its dual, with the last problem being solvable as well. Let us calculate this dual problem, which we denote by $(\overline{GD})$.
If we consider the biconjugate function $\Phi^{c c^{\prime}}:X\times Y \rightarrow \overline{\R}$, then, for all  $(x,y)\in X\times Y,~((x^*,y^*),(u^*,v^*))\in X^*\times Y^* $ and  $\alpha \in \R$
\begin{eqnarray*}
	\Phi^{c c^{\prime}}(x,y)& + & \Phi^c((x^*,y^*),(u^*,v^*),\alpha)\geq  c ((x,y),((x^*,y^*),(u^*,v^*), \alpha)).
\end{eqnarray*}
Taking $y=0, ~x^*=u^*=0$ and  $\alpha>0 $, $\Phi^{c c^{\prime}}(x,0)) + \Phi^c((0,y^*),0,v^*),\alpha)\geq  0$,
\begin{eqnarray}\label{E1}
 \inff_{\substack{y^*,v^*\in Y^*,\\\alpha>0}}  \Phi^c((0,y^*),(0,v^*),\alpha)\geq \supp_{\substack{x\in X}} \{-\Phi^{c c^{\prime}}(x,0)\}\geq \supp_{\substack{x\in X}} \{-\Phi(x,0)\}.
\end{eqnarray}

We consider the following dual problem for  $(\overline{GP}_c)$
\begin{eqnarray*}
(\overline{GD}) &\sup & \{-\Phi(x,0)\} \nonumber\\
& s.t. & x\in X.%
\end{eqnarray*}%

It follows that $v(\overline{GD})\leq v(\overline{GP}_c)$ and, moreover, strong duality for $(\overline{GP}_c)-(\overline{GD})$ is equivalent to converse duality for $(GP)-(GD{_c}).$
In  \cite{F2015}, via the e-convexity of the perturbation function, some regularity conditions for a general primal problem $(GP)$ and its general dual $(GD_{c})$ were obtained.
In particular, assuming the properness and e-convexity of $\Phi$, the closedness-type one,
\begin{equation*}\label{eq:C5}\tag{C5}
	{\Pr}_{W\times\R}\left(\epi\Phi ^{c}\right) \text{ is e}^{\prime}\text{-convex},
\end{equation*}
was introduced to guarantee strong duality for the primal-dual pair $(GP)-(GD_c)$. Due to \cite[Lem.~5.3]{F2015} this condition can be expressed (under the mentioned hypotheses) as
${\Pr}_{W\mathbb{\times R}}\left(\epi\Phi ^{c}\right)=\epi\Phi(\cdot,0)^{c}$. 

In order to be able to provide a counterpart of \eqref{eq:C5} for the pair $(\overline{GP}_c)-(\overline{GD})$, we need first a suitable perturbation function for the new primal problem that plays the role of $\Phi$. Let us consider $\Psi:=\Phi^{c}$ and the perturbation variable space to be $X^*\times X^*$.
$\Psi$ is always e$^{\prime }$-convex, an important feature which represents a difference from the standard context of strong duality on e-convex problems -- see \cite{F2015,FV2016SSD,FajardoVidal2018}.
Recall that there the perturbation function is assumed to be e-convex in order to use the mentioned regularity condition.
Naming $G:=\Phi^{c}((0,\cdot),(0,\cdot),\cdot)$ the new objective function, and defining the space $W_Y:=Y^*\times Y^*\times \R$, $G:W{_Y}\en \Ramp$, one rewrites $(\overline{GP}_c)$ as
\begin{eqnarray*}
(\overline{GP}_c) &  \inf & G(y^*, v^*, \alpha) \\
& s.t. & y^{\ast },v^{\ast }\in Y ^{\ast }, \alpha >0.
\end{eqnarray*}
Observe that if $\Psi((0,y^*),(0,v^*),\alpha) := G(y^*, v^*, \alpha)$, for all $(y^*, v^*, \alpha)\in W{_Y}$, this means that the function $\Psi$ can be considered as a perturbation function for $(\overline{GP}_c)$  and, consequently, a dual problem for it would be
\begin{eqnarray*}
(\overline{GD_c}) &\sup & \{-\Psi^{c^{\prime}}(x,0)\} \nonumber\\
& s.t. & x\in X.
\end{eqnarray*}
In the case $\Phi$ is e-convex, $\Psi^{c^{\prime}}=\Phi$, and we obtain
\begin{eqnarray*}
(\overline{GD_c}) &\sup & \{-\Phi(x,0)\} \nonumber\\
& s.t. & x\in X.
\end{eqnarray*}

Since the biconjugate function is important to develop converse duality, let us analyze further properties of the function $\Phi^{cc^\prime}$.

\begin{theorem}\label{th51}
It always holds $(\Phi (\cdot, 0))^{cc^{\prime}}\geq \Phi^{cc^{\prime}}(\cdot, 0)$.
If $\Phi$ is proper and e-convex and \eqref{eq:C5} is satisfied, one obtains $(\Phi (\cdot, 0))^{cc^{\prime}}= \Phi^{cc^{\prime}}(\cdot, 0)$.
\end{theorem}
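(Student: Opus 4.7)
The plan is to establish both parts by working directly from the definitions of the $c$- and $c^{\prime}$-conjugates, exploiting a single pointwise comparison between $\Phi^{c}((x^{\ast},y^{\ast}),(u^{\ast},v^{\ast}),\alpha)$ and $f^{c}(x^{\ast},u^{\ast},\alpha)$, where throughout I abbreviate $f:=\Phi(\cdot,0)$. The preliminary observation, which I would verify first by restricting the supremum defining $\Phi^{c}$ to the slice $y^{\prime}=0$ (which collapses the admissibility constraint $\langle x^{\prime},u^{\ast}\rangle+\langle y^{\prime},v^{\ast}\rangle<\alpha$ to $\langle x^{\prime},u^{\ast}\rangle<\alpha$ and makes the $y^{\ast}$-coupling vanish), is that
\[
\Phi^{c}((x^{\ast},y^{\ast}),(u^{\ast},v^{\ast}),\alpha)\,\geq\,f^{c}(x^{\ast},u^{\ast},\alpha)
\]
for every $(x^{\ast},u^{\ast},\alpha)\in X^{\ast}\times X^{\ast}\times\R$ and all $y^{\ast},v^{\ast}\in Y^{\ast}$. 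This single algebraic ingredient drives both parts of the statement.

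For the first, unconditional inequality, I next simplify $\Phi^{cc^{\prime}}(x,0)$ by observing that in the coupling $\bar c^{\prime}$ of \eqref{equation:def c bar prime} one has $\langle 0,y^{\ast}\rangle=\langle 0,v^{\ast}\rangle=0$, so
\[
\Phi^{cc^{\prime}}(x,0)=\sup_{\substack{((x^{\ast},y^{\ast}),(u^{\ast},v^{\ast}),\alpha)\\ \langle x,u^{\ast}\rangle<\alpha}}\bigl\{\langle x,x^{\ast}\rangle-\Phi^{c}((x^{\ast},y^{\ast}),(u^{\ast},v^{\ast}),\alpha)\bigr\}.
\]
The preliminary inequality bounds each argument of this supremum above by $\langle x,x^{\ast}\rangle-f^{c}(x^{\ast},u^{\ast},\alpha)$; since $y^{\ast}$ and $v^{\ast}$ no longer appear on the right, passing to the supremum yields exactly $f^{cc^{\prime}}(x)=(\Phi(\cdot,0))^{cc^{\prime}}(x)$, the desired bound.

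For the equality under properness and e-convexity of $\Phi$ together with \eqref{eq:C5}, I would invoke the equivalent restatement recalled in the excerpt, namely $\Pr_{W\times\R}(\epi\Phi^{c})=\epi f^{c}$. Introducing the auxiliary function $h(x^{\ast},u^{\ast},\alpha):=\inf_{(y^{\ast},v^{\ast})\in Y^{\ast}\times Y^{\ast}}\Phi^{c}((x^{\ast},y^{\ast}),(u^{\ast},v^{\ast}),\alpha)$, the preliminary inequality already forces $h\geq f^{c}$. Conversely, arguing at the level of epigraphs, for every $(x^{\ast},u^{\ast},\alpha,r)\in\epi f^{c}$ the projection identity supplies $y^{\ast},v^{\ast}$ with $\Phi^{c}((x^{\ast},y^{\ast}),(u^{\ast},v^{\ast}),\alpha)\leq r$, so $h(x^{\ast},u^{\ast},\alpha)\leq r$. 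Thus $\epi f^{c}\subseteq\epi h$, yielding $h=f^{c}$; substituting this back into the simplified expression for $\Phi^{cc^{\prime}}(x,0)$ above returns $f^{cc^{\prime}}(x)$, which closes the equality.

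The step I expect to be the main obstacle is the careful extended-real bookkeeping when the sup/inf values fall on $\pm\infty$, and in particular the passage from the set-theoretic projection identity to the functional identity $h=f^{c}$ without assuming attainment of the infimum defining $h$; the epigraph-level argument sidesteps any closure issue, but it relies crucially on the equivalent form of \eqref{eq:C5}, which is precisely where properness and e-convexity of $\Phi$ enter via \cite[Lem.~5.3]{F2015}.
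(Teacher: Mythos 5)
Your argument is correct, and its first half is essentially the paper's: you prove the slice inequality $(\Phi(\cdot,0))^c(x^*,u^*,\alpha)\leq\Phi^c((x^*,y^*),(u^*,v^*),\alpha)$ by restricting the supremum to $y=0$, and then pass to second conjugates using that $\bar c^{\prime}$ evaluated at $(x,0)$ collapses to $c^{\prime}((x^*,u^*,\alpha),x)$ -- exactly the chain the paper labels \eqref{E2}--\eqref{E3}. Where you genuinely diverge is the equality case. The paper invokes the strong duality result \cite[Prop.~5.4]{F2015}, which under properness, e-convexity and \eqref{eq:C5} gives $(\Phi(\cdot,0))^c=\min_{y^*,v^*\in Y^*}\Phi^c((\cdot,y^*),(\cdot,v^*),\cdot)$ with attainment, and then reads \eqref{E3} as a chain of equalities. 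You instead use only the projection reformulation of \eqref{eq:C5} from \cite[Lem.~5.3]{F2015}, ${\Pr}_{W\times\R}(\epi\Phi^c)=\epi(\Phi(\cdot,0))^c$, and argue at the level of epigraphs that $h:=\inf_{y^*,v^*}\Phi^c((\cdot,y^*),(\cdot,v^*),\cdot)$ coincides with $(\Phi(\cdot,0))^c$ \emph{without} any attainment of the infimum; since only $h=(\Phi(\cdot,0))^c$ (not the min) is needed to identify $\Phi^{cc^{\prime}}(\cdot,0)=h^{c^{\prime}}=(\Phi(\cdot,0))^{cc^{\prime}}$, this is a slightly weaker and more self-contained use of \eqref{eq:C5}; the paper's route is shorter once Prop.~5.4 is taken for granted. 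One small caution, which you yourself anticipated: your ``simplified'' formula for $\Phi^{cc^{\prime}}(x,0)$ as a supremum restricted to triples with $\ci x,u^*\cd<\alpha$ is not literally an equality under the paper's conventions, because a triple with $\ci x,u^*\cd\geq\alpha$ and $\Phi^c<+\infty$ contributes $+\infty$ to the unrestricted supremum. This does not break the theorem (in that situation $\ci x,u^*\cd\geq\alpha$ and $(\Phi(\cdot,0))^c(x^*,u^*,\alpha)\leq\Phi^c<+\infty$ force $(\Phi(\cdot,0))^{cc^{\prime}}(x)=+\infty$ as well), but the clean fix is either to compare the defining terms over \emph{all} triples, noting $\bar c^{\prime}(((x^*,y^*),(u^*,v^*),\alpha),(x,0))=c^{\prime}((x^*,u^*,\alpha),x)$ and that subtraction is antitone in the subtrahend under the stated conventions, or to do what the paper does and simply apply the antitonicity of the $c^{\prime}$-conjugation to the inequality $(\Phi(\cdot,0))^c\leq\inf_{y^*,v^*}\Phi^c((\cdot,y^*),(\cdot,v^*),\cdot)$.
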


\begin{proof}
Let us take $(x^*,u^*, \alpha)\in W$. Then,
$$(\Phi (\cdot, 0))^c (x^*, u^*, \alpha) = \sup\limits_{x\in X} \{ c(x, (x^*, u^*, \alpha)) - \Phi (x, 0)\}$$
$$= \sup\limits_{x\in X} \{ \bar c((x,0), ((x^*, 0), (u^*,0), \alpha)) - \Phi (x, 0)\}$$ $$ \leq \sup\limits_{x\in X, y\in Y} \{ \bar c((x, y), ((x^*,0),( u^*,0), \alpha)) - \Phi (x, y)\}$$
\begin{equation}\label{E2}
= \Phi^c ((x^*,0),( u^*,0), \alpha) \leq  \inf\limits_{y^*, v^*\in Y^*} \Phi^c ((x^*, y^*), (u^*, v^*), \alpha).
\end{equation}

Taking now the $c^{\prime}$-conjugate of the first and last function in \eqref{E2}, we obtain,
$$(\Phi (\cdot, 0))^{cc^{\prime}} (\cdot)  \geq  \left(\inf\limits_{y^*, v^*\in Y^*} \Phi^c ((\cdot, y^*), (\cdot, v^*), \alpha)\right)^{c^{\prime}} (\cdot)$$
	$$ = \sup_{\substack{x^*, u^* \in X^*\\ y^*, v^* \in Y^*, \alpha \in \R}} \{c(\cdot, (x^*, u^*, \alpha)) - \Phi^c ((x^*, y^*), (u^*, v^*), \alpha))\}$$
	$$ = \sup\limits_{\substack{x^*, u^*\in X^*,\\ y^*, v^*\in Y^*,\ \alpha \in \R}} \{\bar c^{\prime}(((x^*, y^*), (u^*, v^*), \alpha), (\cdot, 0)) - \Phi^c ((x^*, y^*), (u^*, v^*), \alpha))\}$$
\begin{equation}\label{E3} = \Phi^{cc^{\prime}}(\cdot, 0).
\end{equation}

When $\Phi$ is proper and e-convex and \eqref{eq:C5} holds, one has by \cite[Prop.~5.4]{F2015} strong duality for the primal-dual pair $(GP)-(GD_c)$, i.e., for all $ (x^*, u^*, \alpha)\in W$, $(\Phi (\cdot, 0))^c (x^*, u^*, \alpha) = \min_{y^*, v^*\in Y^*} \Phi^c ((x^*, y^*), (u^*, v^*), \alpha)$. The chain in \eqref{E3} is then a chain of equalities, and one obtains $(\Phi (\cdot, 0))^{cc^{\prime}}= \Phi^{cc^{\prime}}(\cdot, 0)$.
\end{proof}

Next we present the counterparts of Lemma 5.1, 5.2 and 5.3 from \cite{F2015} that are necessary for proving the converse duality statement.


\begin{lemma} \label{lemma biconj1}
It holds
\begin{equation*}
\Psi((0,\cdot),(0,\cdot), \cdot)^{c{^{\prime}}}\leq \inf_{x \in X} \Psi ^{c ^{\prime}} (x, \cdot).
\end{equation*}
\begin{proof}
Fixing $y \in Y$, for all $x \in X, ((x^*,u^*),(y^*,v^*)) \in X^* \times Y^*$ and $\alpha \in \R$, we have
$$c{^{\prime}}(((x^*,u^*),(y^*,v^*),\alpha), (x,y))-\Psi ((x^*,u^*),(y^*,v^*),\alpha) \leq \Psi ^{c^{\prime}} (x,y).$$

Let $ x ^* =y^*=0$. Then, for all $x\in X, (u^*,v^*,\alpha) \in Y^* \times Y^* \times \R$,
$c{^{\prime}}((u^*,v^*,\alpha), y)-\Psi ((0,u^*),(0,v^*),\alpha) \leq \Psi ^{c^{\prime}} (x,y)$.
Then, $\Psi((0,\cdot),(0,\cdot),$ $\cdot)^{c^{\prime}}(y)\leq \inf_{x \in X} \Psi ^{c ^{\prime}} (x, y)$.
\end{proof}
\end{lemma}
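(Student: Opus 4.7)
My plan is to derive the inequality directly from the defining Fenchel--Young inequality for $\Psi^{c'}$, by specializing the perturbation arguments so that the bigger coupling $\bar c'$ collapses to the smaller coupling $c'$ on $W_Y \times Y$.

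First I would fix $y \in Y$ and start from the generalized Fenchel--Young inequality that is built into the definition of the $c'$-conjugate: for every $(x,y) \in X \times Y$ and every $w = ((x^*,y^*),(u^*,v^*),\alpha) \in (X^* \times Y^*) \times (X^* \times Y^*) \times \R$, one has $\bar c'(w,(x,y)) - \Psi(w) \leq \Psi^{c'}(x,y)$ simply because the right-hand side is the supremum over all such $w$.

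Next I would specialize to $x^* = 0$ and $u^* = 0$. Inspecting \eqref{equation:def c bar}--\eqref{equation:def c bar prime}, with these choices the strict inequality $\langle x,u^*\rangle + \langle y,v^*\rangle < \alpha$ that determines the finite branch of $\bar c'$ reduces to $\langle y,v^*\rangle < \alpha$, which no longer depends on $x$; and on this branch the finite value $\langle x,x^*\rangle + \langle y,y^*\rangle$ collapses to $\langle y,y^*\rangle$, again independent of $x$. In other words, $\bar c'(((0,y^*),(0,v^*),\alpha),(x,y)) = c'((y^*,v^*,\alpha),y)$ for every $x \in X$, where $c'$ is the $W_Y$--$Y$ coupling. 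Inserting this into the Fenchel--Young inequality gives $c'((y^*,v^*,\alpha),y) - \Psi((0,y^*),(0,v^*),\alpha) \leq \Psi^{c'}(x,y)$ for every $x \in X$.

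Finally, since the left-hand side no longer depends on $x$, I would take the infimum over $x \in X$ on the right, and then the supremum over $(y^*,v^*,\alpha) \in W_Y$ on the left, to obtain precisely $\Psi((0,\cdot),(0,\cdot),\cdot)^{c'}(y) \leq \inf_{x \in X} \Psi^{c'}(x,y)$. I do not anticipate any real obstacle here: the only point to check carefully is the compatibility between $\bar c'$ and $c'$ under the partial zero choice, but this is immediate from the two-case definition of $\bar c'$, and the degenerate case $\langle y,v^*\rangle \geq \alpha$ causes no issue because both sides then become $+\infty$ (or the inequality is trivial by the conventions on $\pm\infty$ recalled after \eqref{equation: def c and c prime}).
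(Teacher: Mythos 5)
Your argument is correct and is essentially the paper's own proof: start from the Fenchel--Young inequality defining $\Psi^{c^{\prime}}$, set the $X^*$-components of the conjugate variables to zero so that the coupling $\bar c^{\prime}$ on $(X^*\times Y^*)\times(X^*\times Y^*)\times\R$ collapses to the coupling $c^{\prime}$ on $Y^*\times Y^*\times\R$, then take the infimum over $x\in X$ and the supremum over $(y^*,v^*,\alpha)$. The only difference is that you spell out the collapse of $\bar c^{\prime}$ to $c^{\prime}$, which the paper leaves implicit.
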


We continue with a Moreau-Rockafellar-type result involving the $c^{\prime}$-conjugate of the function $\Psi((0,\cdot)$, $(0,\cdot),\cdot)$ that is also of interest per se.

\begin{lemma}\label{lemma 3.2}
$\Psi((0,\cdot),(0,\cdot),\cdot)^{c^{\prime}}$ is the e-convex hull of $\inf_{x\in X} \Psi^{c^{\prime}} (x, \cdot)$.
\end{lemma}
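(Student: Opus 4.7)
The plan is to compute $h^c$ directly, where $h:=\inf_{x\in X}\Psi^{c^{\prime}}(x,\cdot)$, and show that it equals $G:=\Psi((0,\cdot),(0,\cdot),\cdot)$; then $h^{cc^{\prime}}=G^{c^{\prime}}$ is exactly the function $H:=\Psi((0,\cdot),(0,\cdot),\cdot)^{c^{\prime}}$, and Theorem \ref{th:Theorem2.2} ii) identifies $h^{cc^{\prime}}$ with $\eco h$.

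Concretely, I would first swap the infimum through the supremum in the definition of the $c$-conjugate,
\begin{equation*}
h^c(y^*,v^*,\alpha) = \supp_{y\in Y}\{c(y,(y^*,v^*,\alpha))-\inff_{x\in X}\Psi^{c^{\prime}}(x,y)\} = \supp_{(x,y)\in X\times Y}\{c(y,(y^*,v^*,\alpha))-\Psi^{c^{\prime}}(x,y)\}.
\end{equation*}
The key identification, valid for every $x\in X$, is
$c(y,(y^*,v^*,\alpha))=\bar c((x,y),((0,y^*),(0,v^*),\alpha))$,
which follows at once from \eqref{equation:def c bar}: pairing the $x$-slot against $0\in X^*$ contributes nothing either to the value or to the defining inequality $\langle y,v^*\rangle<\alpha$. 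Substituting this in yields
\begin{equation*}
h^c(y^*,v^*,\alpha) = \supp_{(x,y)\in X\times Y}\{\bar c((x,y),((0,y^*),(0,v^*),\alpha))-\Psi^{c^{\prime}}(x,y)\} = \Psi^{c^{\prime}c}((0,y^*),(0,v^*),\alpha).
\end{equation*}

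Since $\Psi=\Phi^c$ is e$^{\prime}$-convex by Theorem \ref{th:Theorem2.2} i), Theorem \ref{th:Theorem2.2} iii) gives $\Psi^{c^{\prime}c}=\Psi$, so $h^c=G$. Taking the $c^{\prime}$-conjugate on both sides yields $h^{cc^{\prime}}=G^{c^{\prime}}=H$. To close the argument I would invoke Theorem \ref{th:Theorem2.2} ii), which equates $h^{cc^{\prime}}$ with $\eco h$ as soon as $h$ admits a proper e-convex minorant; the preceding Lemma \ref{lemma biconj1} already furnishes such a minorant, namely $H$ itself (in the nontrivial case where $H$ is proper).

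The main technical obstacle I anticipate is the bookkeeping between the coupling functions $c$, $c^{\prime}$ on $X\times W$ and $\bar c$, $\bar c^{\prime}$ on $(X\times Y)\times (X^*\times Y^*)\times(X^*\times Y^*)\times\R$: the whole argument pivots on the observation that inserting a trivial $X$-component with dual variable $0$ identifies a $c$-elementary function on $Y$ with a $\bar c$-elementary function on $X\times Y$, which is precisely what allows the supremum over $x$ in $h^c$ to be absorbed into $\Psi^{c^{\prime}c}$. Degenerate cases (such as $h\equiv+\infty$) can be checked separately but do not affect the conclusion.
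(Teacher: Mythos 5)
Your proposal is correct and follows essentially the same route as the paper's own proof: compute the $c$-conjugate of $\inf_{x\in X}\Psi^{c^{\prime}}(x,\cdot)$, absorb the supremum over $x$ by identifying $c(y,(y^*,v^*,\alpha))$ with $\bar c((x,y),((0,y^*),(0,v^*),\alpha))$, use the e$^{\prime}$-convexity of $\Psi$ to get $\Psi^{c^{\prime}c}=\Psi$, and then identify the double conjugate with the e-convex hull via Theorem \ref{th:Theorem2.2}. Your explicit check that Lemma \ref{lemma biconj1} supplies the proper e-convex minorant needed for Theorem \ref{th:Theorem2.2} ii) is a point the paper leaves implicit, but otherwise the arguments coincide.
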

\begin{proof}
We have, for all $(y^*,v^*,\alpha)\in Y^*\times Y^*\times \R$,
$$
(\inf_{x\in X} \Psi^{c^{\prime}}(x,\cdot))^{c}(y^*,v^*,\alpha)  = \sup_{y\in Y}\left\{ c(y,(y^*,v^*,\alpha))-\inf_{x\in X}\Psi^{c^{\prime}}(x,y)\right\}$$
$$ = \sup_{(x,y)\in X \times Y}\left\{ c((x,y),((0,y^*),(0,v^*),\alpha))-\Psi^{c^{\prime}}(x,y)\right\}$$
$$ = \Psi^{c^{\prime} c}((0,y^*),(0,v^*),\alpha)= \Psi((0,y^*),(0,v^*),\alpha),$$
due to the $\ep$-convexity of $\Psi$ and Theorem \ref{th:Theorem2.2}.
Taking into account that, applying again Theorem \ref{th:Theorem2.2} it yields $\eco(\inf_{x\in X} \Psi^{c^{\prime}}(x,\cdot))=(\inf_{x\in X} \Psi^{c^{\prime}}(x,\cdot))^{c{c^{\prime}}}$, we conclude that $\eco(\inf_{x\in X} \Psi^{c^{\prime}}(x,\cdot))=\Psi((0,\cdot),(0,\cdot),\cdot)^{c^{\prime}}$.
\end{proof}

\begin{lemma}\label{lemma 3.3}
If the  e-convex hull of ${\Pr}_{Y\times\R}(\epi\Psi^{c^{\prime}})$ is functionally representable, then $\epi \Psi$ $((0,\cdot), (0,\cdot),\cdot)^{c^{\prime}}$ is the e-convex hull of ${\Pr}_{Y\times\R}(\epi\Psi^{c^{\prime}})$.
\end{lemma}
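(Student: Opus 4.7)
The plan is to leverage Lemma \ref{lemma 3.2} together with Proposition \ref{Proposition2.4} to identify $\epi\Psi((0,\cdot),(0,\cdot),\cdot)^{c^{\prime}}$ with $\eco({\Pr}_{Y\times\R}(\epi\Psi^{c^{\prime}}))$ via the function $h_C$ attached to $C:=\eco({\Pr}_{Y\times\R}(\epi\Psi^{c^{\prime}}))$.

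\textbf{Step 1: preparation.} Set $A:={\Pr}_{Y\times\R}(\epi\Psi^{c^{\prime}})$ and $C:=\eco A$. The functional representability of $C$ tacitly assumes $C$ is nonempty, e-convex and $(0,1)\in\rec C$; nonemptiness of $A$ is therefore implicit (the degenerate case $A=\emptyset$ means $\Psi^{c^{\prime}}\equiv +\infty$, so both sides of the claimed equality reduce to the empty set and nothing is to prove). From $A\ne\emptyset$ and the obvious fact that $(y,r)\in A$ implies $(y,r+\lambda)\in A$ for every $\lambda\ge 0$, we can pick a point in $A\subseteq C$ and invoke Lemma \ref{lemma:lemma 2.3} to obtain $(0,1)\in\rec C$ (which is anyway part of the hypothesis). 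Define $h_C(y):=\inf\{a\in\R:(y,a)\in C\}$.

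\textbf{Step 2: use of Proposition \ref{Proposition2.4} and functional representability.} By Proposition \ref{Proposition2.4}, $h_C$ is e-convex and $\epi h_C=C\cup\grh h_C$. The assumption that $C$ is functionally representable means precisely that $\grh h_C\subseteq C$, so $\epi h_C = C$.

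\textbf{Step 3: identify $h_C$ with $\eco(\inf_{x\in X}\Psi^{c^{\prime}}(x,\cdot))$.} Let $h(y):=\inf_{x\in X}\Psi^{c^{\prime}}(x,y)$. One observes directly that
\[
h(y)=\inf\{a\in\R:\exists x\in X,\ \Psi^{c^{\prime}}(x,y)\le a\}=\inf\{a\in\R:(y,a)\in A\},
\]
so $h\ge h_C$ since $A\subseteq C$. As $h_C$ is e-convex, this yields $\eco h\ge h_C$. Conversely, $\epi(\eco h)$ is an e-convex set that contains $\epi h$, and since $A\subseteq\epi h\subseteq\epi(\eco h)$, we obtain by minimality $C=\eco A\subseteq\epi(\eco h)$, which is exactly $h_C\ge\eco h$ by virtue of Step 2. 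Hence $h_C=\eco h$.

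\textbf{Step 4: conclusion via Lemma \ref{lemma 3.2}.} By Lemma \ref{lemma 3.2}, $\eco h=\Psi((0,\cdot),(0,\cdot),\cdot)^{c^{\prime}}$, and combining with Steps 2 and 3,
\[
\epi\Psi((0,\cdot),(0,\cdot),\cdot)^{c^{\prime}}=\epi h_C=C=\eco({\Pr}_{Y\times\R}(\epi\Psi^{c^{\prime}})),
\]
which is the desired conclusion. The main subtle point in this plan is Step 3: turning the set inclusion $A\subseteq\epi h$ into the functional inequality $h_C\ge\eco h$ requires that $\epi h_C$ genuinely equal $C$ (not merely $C\cup\grh h_C$), which is precisely where the functional representability hypothesis is indispensable; without it one would be unable to discard possible graph points that lie outside $C$.
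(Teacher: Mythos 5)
Your proof is correct, and it takes a genuinely different route from the paper's. The paper identifies $\epi \Psi((0,\cdot),(0,\cdot),\cdot)^{c^{\prime}}$ with $\epi(\eco p)$ for the infimum value function $p=\inf_{x\in X}\Phi(x,\cdot)$ of the original perturbation function (essentially redoing the computation of Lemma \ref{lemma 3.2} with $\Phi$ in place of $\Psi^{c^{\prime}}$, via $\Psi((0,y^*),(0,v^*),\alpha)=p^{c}(y^*,v^*,\alpha)$), and then proves $h=\eco p$ for the representing function $h$ of $C$ by a two-sided argument that needs minimizing sequences and the inclusion ${\Pr}_{Y\times\R}(\epi\Phi)\subseteq{\Pr}_{Y\times\R}(\epi\Psi^{c^{\prime}})$. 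You instead stay entirely on the dual side: you invoke Lemma \ref{lemma 3.2} as stated (which the paper proves immediately before but does not actually use in its own proof of this lemma), take $h=\inf_{x\in X}\Psi^{c^{\prime}}(x,\cdot)$, and get $h_C=\eco h$ purely from $h(y)=\inf\{a:(y,a)\in A\}$, the e-convexity of $h_C$ given by Proposition \ref{Proposition2.4}, and the minimality of the e-convex hull, with no sequences and no reference to $\Phi$ at all. This yields a shorter argument and makes transparent that the lemma is a statement about $\Psi^{c^{\prime}}$ alone; the paper's detour through $p$ has the side benefit of recording the identity $\epi\Psi((0,\cdot),(0,\cdot),\cdot)^{c^{\prime}}=\epi(\eco p)$, which fits its later converse-duality discussion. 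One small quibble with your closing remark: the implication $C\subseteq\epi(\eco h)\Rightarrow \eco h\leq h_C$ already follows from the definition $h_C(y)=\inf\{a:(y,a)\in C\}$ without knowing $\epi h_C=C$; where functional representability is truly indispensable is in your Step 2, namely in ensuring that $C$ itself is an epigraph, so that the final chain $\epi\Psi((0,\cdot),(0,\cdot),\cdot)^{c^{\prime}}=\epi h_C=C$ closes.
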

\begin{proof}
First, we see that  ${\Pr}_{Y\times\R}(\epi\Psi^{c^{\prime}})\subset \epi \Psi((0,\cdot),(0,\cdot),\cdot)^{c^{\prime}}$.
Take any point $(y,\beta)\in {\Pr}_{Y\times\R}(\epi\Psi^{c^{\prime}})$. It means that there exists $x\in X$ such that $\Psi^{c^{\prime}}(x,y)\leq \beta$, and, for all $(x^*, y^*),(u^*,v^*)\in X^* \times Y^*$ and $\alpha \in \R$, we have
 \begin{equation*}
c^{\prime}(((x^*, y^*),(u^*,v^*),\alpha), (x,y))-\Psi((x^*, y^*),(u^*,v^*),\alpha) \leq \beta.
\end{equation*}
Take $x^*=u^*=0$.
Then, for all $(y^*,v^*,\alpha) \in Y^* \times Y^* \times \R$,
 \begin{equation*}
c^{\prime}(( y^*,v^*,\alpha),y)-\Psi((0,y^*), (0,v^*),\alpha) \leq \beta.
\end{equation*}
We have $\Psi ((0, \cdot),(0, \cdot), \cdot)^{c^{\prime}} (y) \leq \beta$ and $(y,\beta)\in\epi \Psi((0,\cdot),(0,\cdot),\cdot)^{c^{\prime}}$.
Hence,
\begin{equation}\label{ecuacion 3.1}
 \eco{\Pr}_{Y\times\R}(\epi\Psi^{c^{\prime}})\subset \epi \Psi((0,\cdot),(0,\cdot),\cdot)^{c^{\prime}}.
\end{equation}
On the other hand, let us observe that $(y, \beta)\in \epi \Psi((0,\cdot),(0,\cdot),\cdot)^{c^{\prime}}$  if and only if
$$ \sup_{(y^*,v^*,\alpha)\in  Y^*\times Y^*\times \R} \left\{ c^{\prime} ((y^*,v^*,\alpha), y)-\Psi ((0,y^*),(0,v^*),\alpha)\right\} \leq \beta,$$
or, recalling the definition of the infimum value function $p$,
\begin{equation*}
\sup_{(y^*,v^*,\alpha)\in  Y^*\times Y^*\times \R} \left\{ c^{\prime} ((y^*,v^*,\alpha), y)- p^{c}(y^*,v^*,\alpha) \right\} \leq \beta,
\end{equation*}
which means that $p^{cc^{\prime}} (y) \leq \beta$.
Thus, by Theorem \ref{th:Theorem2.2} it follows
 \begin{equation}\label{ecuacion 3.2}
 \epi \Psi((0,\cdot),(0,\cdot),\cdot)^{c^{\prime}}=\epi (\eco p).
\end{equation}
Now, let $C=\eco ( {\Pr}_{Y\times\R}(\epi\Psi^{c^{\prime}}))$.
Taking any point $(x,y, \Psi^{c^{\prime}}(x,y)) \in \epi  \Psi^{c^{\prime}}$, we have $(y, \Psi^{c^{\prime}}(x,y)) \in C$ and, for all $\lambda \geq 0$,
$(y, \Psi^{c^{\prime}}(x,y))+\lambda(0,1)=(y, \Psi^{c^{\prime}}(x,y)+\lambda) \in C$. According to Lemma \ref{lemma:lemma 2.3}, $(0,1)\in \rec C$, and since $C$ is functionally representable by assumption, due to Remark \ref{rem:Remark2.6} we obtain $C=\epi h$ with
$h(x)= \inf \left \lbrace a \in \R \,:\,(x,a) \in C \right \rbrace$ .
If we show that $h=\eco p$, we have, using also \eqref{ecuacion 3.2}, the chain
\begin{equation*}
 \epi \Psi((0,\cdot),(0,\cdot),\cdot)^{c^{\prime}}=\epi (\eco  p)=\epi h = C = \eco ( {\Pr}_{Y\times\R}(\epi\Psi^{c^{\prime}})).
\end{equation*}
Take some $y \in \dom h$ such that $h (y) > - \infty$. Combining \eqref{ecuacion 3.1} with \eqref{ecuacion 3.2}, we deduce that
$ C\subseteq \epi (\eco p).$
Moreover, for all $a \in \R$ such that  $(y,a) \in C$, we have $\eco p(y) \leq a$, so $  \eco p(y) \leq  h(y)$.

In the case $h (y) = - \infty$, there exists a sequence $ \left \lbrace a_{r} \right \rbrace \subset \R, a_{r} < -{1 /r}$, for all $r \in \N$, verifying $(y, a_{r}) \in C$, for all $r \in \R$, hence
 $(y, a_{r}) \in \epi (\eco p)$ and  $\eco p(y)=-\infty$.

Finally, we show that for all $y \in \dom p$, $h(y) \leq p(y)$.
Notice that from the e-convexity of $h$ and the definition of the e-convex hull of a function, we deduce $h(y) \leq \eco p(y)$.

Let $y \in \dom p$, with $p (y) > - \infty$, denote $a=p(y)$. Then $\inf_{x \in X} \Phi (x,y)= a$, and there exists a sequence $ \left \lbrace {x_{r}} \right \rbrace \subset X$, such that $\lim_{r \rightarrow +\infty} \Phi (x_{r},y)=a$, and
\begin{equation*}
\left ( y, \Phi (x_{r},y) \right ) \in  {\Pr}_{Y\times\R}(\epi\Phi) \subseteq {\Pr}_{Y\times\R}(\epi\Psi^{c^{\prime}}) \subseteq C,
\end{equation*}
for all $r \in \N$, then $h(y) \leq \Phi (x_{r},y)$, for all $r \in \N$, and $h(y) \leq a$.\\
In the case $p (y) = - \infty$, there exists a sequence $ \left \lbrace x_{r} \right \rbrace \subset X$ such that $\Phi (x_{r},y) < -{1 /r}$, for all $r \in \N$, and, since  $\left ( y, \Phi (x_{r},y) \right ) \in C$, for all $r \in \N$, we have $h(y) \leq \Phi (x_{r},y)$, for all $r \in \N$, concluding  $h (y) = - \infty$.
\end{proof}

We give the following sufficient condition to converse duality, assuming that $\Phi$ is a proper and e-convex function,
\begin{equation}\label{eq:C5_barra}
	\tag{$\overline{{\rm C}5}$}
	{\Pr}_{Y\times\mathbb{ R}}\left(\epi\Psi^{c^{\prime}}\right) \text{ is e-convex and functionally representable}.
\end{equation}

\begin{remark}
Although under e-convexity of the perturbation function $\Phi$, it holds $\Psi^{c^{\prime}}=\Phi$, we kept in the formulation of \eqref{eq:C5_barra} the function  $\Psi^{c^{\prime}}$, in order to have a similar formulation to \eqref{eq:C5}.
\end{remark}

\begin{proposition}\label{conv-d}
If $\Phi$ is proper and e-convex, the regularity condition \eqref{eq:C5_barra} guarantees the converse duality between $(GP)$ and $(GD)$.
\end{proposition}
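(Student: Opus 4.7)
The plan is to translate converse duality for $(GP)-(GD_c)$ into strong duality for the auxiliary pair $(\overline{GP}_c)-(\overline{GD})$, as indicated in the discussion preceding the statement. Since $\Phi$ is proper and e-convex, Theorem \ref{th:Theorem2.2} yields $\Psi^{c^{\prime}} = \Phi^{cc^{\prime}} = \Phi$, so $(\overline{GD})$ coincides with $(\overline{GD_c})$. Writing $p(y) := \inf_{x \in X} \Phi(x, y)$ for the infimum value function, the two requirements of converse duality boil down to verifying $p(0) = p^{cc^{\prime}}(0)$ (no duality gap) together with the existence of some $\bar x \in X$ with $\Phi(\bar x, 0) = p(0)$ (primal solvability).

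To set these up I would first perform a short direct computation from \eqref{equation:def c bar} showing that $\Psi((0, y^*), (0, v^*), \alpha) = p^{c}(y^*, v^*, \alpha)$ for every $(y^*, v^*, \alpha) \in W_Y$, and hence $\Psi((0, \cdot), (0, \cdot), \cdot)^{c^{\prime}} = p^{cc^{\prime}}$ on $Y$. Next I would apply Lemma \ref{lemma 3.3} under the hypothesis \eqref{eq:C5_barra}: since $\Pr_{Y \times \R}(\epi \Psi^{c^{\prime}}) = \Pr_{Y \times \R}(\epi \Phi)$ (using $\Psi^{c^{\prime}} = \Phi$) is e-convex, its e-convex hull equals itself, and functional representability together with Remark \ref{rem:Remark2.6} identifies this set with $\epi h$, where $h(y) = \inf\{a \in \R : \exists x \in X,\, \Phi(x, y) \leq a\} = p(y)$. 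Concatenating these identifications,
\begin{equation*}
\epi p^{cc^{\prime}} = \epi \Psi((0, \cdot), (0, \cdot), \cdot)^{c^{\prime}} = \Pr\nolimits_{Y \times \R}(\epi \Phi) = \epi p,
\end{equation*}
which forces $p = p^{cc^{\prime}}$.

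The absence of a duality gap then follows from
\begin{equation*}
v(GD_c) = \sup\{-p^{c}(y^*, v^*, \alpha) : y^*, v^* \in Y^*,\, \alpha > 0\} = p^{cc^{\prime}}(0) = p(0) = v(GP),
\end{equation*}
while primal solvability in the nontrivial case $p(0) \in \R$ is extracted directly from $\Pr_{Y \times \R}(\epi \Phi) = \epi p$: the point $(0, p(0))$ lies in the projected epigraph, producing $\bar x \in X$ with $\Phi(\bar x, 0) \leq p(0)$, and the reverse inequality is immediate from the definition of $p$.

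The main obstacle is keeping the two "sides" of $\Psi$ separated: on the one hand the sliced function $\Psi((0, \cdot), (0, \cdot), \cdot) = p^{c}$ on $W_Y$, whose $c^{\prime}$-conjugate equals $p^{cc^{\prime}}$, and on the other hand $\Psi^{c^{\prime}}$ on $X \times Y$, which by e-convexity of $\Phi$ collapses to $\Phi$ itself and whose epigraph projects onto $\epi p$ under \eqref{eq:C5_barra}. Once this bookkeeping is in place, the hypothesis \eqref{eq:C5_barra} is precisely what equates these two descriptions of the same subset of $Y \times \R$, and the preceding lemmas close the argument.
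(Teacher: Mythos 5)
Your argument is correct and follows essentially the same route as the paper: invoke Lemma \ref{lemma 3.3} under \eqref{eq:C5_barra} (with $\Psi^{c^{\prime}}=\Phi$ by even convexity), identify $\Psi((0,\cdot),(0,\cdot),\cdot)^{c^{\prime}}$ with $p^{cc^{\prime}}$, evaluate at $0$ to close the duality gap, and extract a primal solution from the equality of ${\Pr}_{Y\times\R}(\epi\Phi)$ with an epigraph; your use of Remark \ref{rem:Remark2.6} and $h=p$ is just a mild repackaging of the paper's steps \eqref{ecuacion_10}--\eqref{ecuacion_12}. The only detail you gloss over is the equality $v(GD_c)=p^{cc^{\prime}}(0)$, where the supremum defining $p^{cc^{\prime}}(0)$ runs over all $\alpha\in\R$ while the dual only over $\alpha>0$; this needs the paper's observation that $p^{c}(y^*,v^*,\alpha)=+\infty$ whenever $\alpha\leq 0$ (using $0\in{\Pr}_Y(\dom\Phi)$ and the sign conventions), so that those terms contribute $-\infty$.
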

\begin{proof}
According to Lemma \ref{lemma 3.3}, under \eqref{eq:C5_barra} one has
\begin{equation}
{\Pr}_{Y\times\R}(\epi\Psi^{c^{\prime}})=\epi \Psi((0,\cdot),(0,\cdot),\cdot)^{c^{\prime}}.\label{ecuacion_8}
\end{equation}
On the other hand, it is clear that
\begin{equation}
{\Pr}_{Y\times\R}(\epi\Psi^{c^{\prime}})\subset \epi\inf_{x\in X} \Psi^{c^{\prime}}(x,\cdot), \label{ecuacion_10}
\end{equation}
since, taking  $(\bar y,\bar \beta)\in {\Pr}_{Y\times\R}(\epi\Psi^{c^{\prime}})$,  there exists $\overline x\in X$ such that $\Psi^{c^{\prime}}(\bar x,\bar y)\leq \bar \beta$, then $\inf_{x\in X}\Psi^{c^{\prime}} (x,\bar y)\leq \bar \beta$.
Now, we show that
\begin{equation}
\inf_{x\in X} \Psi^{c^{\prime}}(x,\cdot)=\Psi((0,\cdot),(0,\cdot),\cdot)^{c^{\prime}}.\label{ecuacion_11}
\end{equation}
Due to Lemma \ref{lemma biconj1}, it is enough to show that, for all $y \in Y$, one has
\begin{equation}
\inf_{x\in X} \Psi^{c^{\prime}}(x,y)\leq \Psi((0,\cdot),(0,\cdot),\cdot)^{c^{\prime}}(y).\label{ecuacion_12}
\end{equation}
Take any point  $y \in Y$ and denote $a=\Psi((0,\cdot),(0,\cdot),\cdot)^{c^{\prime}}(y)\in \R$.
Then, according to \eqref{ecuacion_8} it yields that $(y,a) \in {\Pr}_{Y\times\R}(\epi\Psi^{c^{\prime}})$.
Hence, by \eqref{ecuacion_10}, $(y,a) \in \epi\inf_{x\in X} \Psi^{c^{\prime}}(x,\cdot)$ and \eqref{ecuacion_12} holds.
Now, let $\gamma=v(\overline{GP}_c)$, and let us observe that if $\alpha \leq 0$, $(y^*,v^*,\alpha)\notin \dom (\Psi(0,\cdot),(0,\cdot),\cdot)$, for any $(y^*,v^*)\in Y^*\times Y^*$.
To show the latter statement, it suffices to take any point $x \in \dom F$ and check that it holds $c((x,0),(0, y^*),(0,v^*), \alpha)-\Phi(x,0)=+\infty$. Then
\begin{eqnarray*}
-\gamma& = & \sup_{(y^*,v^*)\in Y^*\times Y^*, \alpha >0} \big\{-\Psi ((0,y^*),(0,v^*),\alpha)\big\}\\
& = & \sup_{(y^*,v^*)\in Y^*\times Y^*, \alpha >0} \left\{c(0,(y^*,v^*,\alpha))-\Psi((0,y^*),(0,v^*),\alpha) \right\}\\
&=& \Psi((0,\cdot),(0,\cdot),\cdot))^{c^{\prime}} (0) =\inf_{x\in X} \Psi^{c^{\prime}}(x,0),
\end{eqnarray*}
where the last equality comes from \eqref{ecuacion_11}. Since the perturbation function is e-convex, $\Psi^{c^{\prime}}=\Phi$, and we have
$-\gamma =\inf_{x\in X} \Phi(x,0)=-\sup_{x\in X} \{-\Phi(x,0)\}=-v(\overline{GD})=-v(\overline{GP}_c)$. Moreover and due to $(0,-\gamma)\in \epi\Psi((0,\cdot),(0,\cdot),\cdot)^{c^{\prime}}$, equation \eqref{ecuacion_8} implies that there exists $\bar x \in X$ such that $\Phi (\bar x, 0) \leq -\gamma$ and $-\Phi (\bar x, 0) \geq v(\overline{GD})$, so $(\overline{GD})$ is solvable.
\end{proof}

Next proposition states an alternative formulation for condition \eqref{eq:C5_barra} .

\begin{proposition}\label {prop biconj2}
If $\Phi$ is proper and e-convex, and $ \eco{{\Pr}_{Y\times\R}(\epi\Psi^{c^{\prime}})}$ is functionally representable, then  ${\Pr}_{Y\times\R}(\epi \Psi ^{c ^{\prime}} )$ is e-convex if and only if
\begin{equation*}
 \Psi((0,\cdot),(0,\cdot), \cdot)^{c{^{\prime}}}= \min_{x \in X} \Psi ^{c ^{\prime}} (x, \cdot).
\end{equation*}
\end{proposition}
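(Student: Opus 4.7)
The plan is to pivot on Lemma \ref{lemma 3.3}, together with the pointwise inequality from Lemma \ref{lemma biconj1}, and to translate the equivalence ${\Pr}_{Y\times\R}(\epi\Psi^{c^{\prime}})$ e-convex $\Leftrightarrow$ attainment in the infimum over $x$. The key observation is that $(y,\beta)\in{\Pr}_{Y\times\R}(\epi\Psi^{c^{\prime}})$ iff there exists $x\in X$ with $\Psi^{c^{\prime}}(x,y)\le\beta$, so ${\Pr}_{Y\times\R}(\epi\Psi^{c^{\prime}})$ differs from $\epi\inf_{x\in X}\Psi^{c^{\prime}}(x,\cdot)$ exactly by those points $(y,\beta)$ for which the infimum is not attained. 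On top of this, by Theorem \ref{th:Theorem2.2}(i) the set $\epi\Psi((0,\cdot),(0,\cdot),\cdot)^{c^{\prime}}$ is automatically e-convex, since $\Psi((0,\cdot),(0,\cdot),\cdot)^{c^{\prime}}$ is a $c^{\prime}$-conjugate.

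For the sufficiency direction, I would simply notice that the assumed equality $\Psi((0,\cdot),(0,\cdot),\cdot)^{c^{\prime}}=\min_{x\in X}\Psi^{c^{\prime}}(x,\cdot)$ forces the identity ${\Pr}_{Y\times\R}(\epi\Psi^{c^{\prime}})=\epi\Psi((0,\cdot),(0,\cdot),\cdot)^{c^{\prime}}$, because attainment of the min at every $y$ is precisely what is needed to turn $\epi\inf_{x}\Psi^{c^{\prime}}(x,\cdot)$ into ${\Pr}_{Y\times\R}(\epi\Psi^{c^{\prime}})$. Since the right-hand side is e-convex by Theorem \ref{th:Theorem2.2}(i), the claim follows immediately.

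For the converse, assuming ${\Pr}_{Y\times\R}(\epi\Psi^{c^{\prime}})$ is e-convex, it coincides with its own e-convex hull, which by hypothesis is functionally representable. Hence Lemma \ref{lemma 3.3} applies and yields $\epi\Psi((0,\cdot),(0,\cdot),\cdot)^{c^{\prime}}={\Pr}_{Y\times\R}(\epi\Psi^{c^{\prime}})$. Given any $y\in Y$, set $a:=\Psi((0,\cdot),(0,\cdot),\cdot)^{c^{\prime}}(y)$. If $a<+\infty$, then $(y,a)$ lies in the projected epigraph, so some $\bar x\in X$ satisfies $\Psi^{c^{\prime}}(\bar x,y)\le a$; combined with Lemma \ref{lemma biconj1}, this chain of inequalities forces $\Psi^{c^{\prime}}(\bar x,y)=a=\inf_{x\in X}\Psi^{c^{\prime}}(x,y)$, so the infimum is actually a minimum. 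If $a=+\infty$, Lemma \ref{lemma biconj1} already gives $\inf_{x\in X}\Psi^{c^{\prime}}(x,y)=+\infty$, trivially realised as a min over the empty set (or, equivalently, by convention).

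The main delicate point, and the only real obstacle, is the bookkeeping around attainment and $\pm\infty$ values: one must keep ${\Pr}_{Y\times\R}(\epi\Psi^{c^{\prime}})$ conceptually separate from $\epi\inf_{x}\Psi^{c^{\prime}}(x,\cdot)$ and use Lemma \ref{lemma biconj1} at exactly the right moment to squeeze the value $\Psi^{c^{\prime}}(\bar x,y)$ between $a$ and the infimum. The functional-representability hypothesis enters only through its invocation in Lemma \ref{lemma 3.3} in the necessity direction, and the e-convexity of the projected epigraph in the sufficiency direction is free from Theorem \ref{th:Theorem2.2}(i); no extra regularity condition of the type \eqref{eq:C5_barra} needs to be carried through the argument.
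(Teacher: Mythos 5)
Your proof is correct and follows essentially the same route as the paper: Lemma \ref{lemma 3.3} (where the functional representability hypothesis enters) together with the e-convexity of $c^{\prime}$-conjugates yields the equivalence of e-convexity of the projection with the set identity ${\Pr}_{Y\times\R}(\epi\Psi^{c^{\prime}})=\epi\Psi((0,\cdot),(0,\cdot),\cdot)^{c^{\prime}}$, and Lemma \ref{lemma biconj1} then squeezes out the pointwise attainment. Your explicit bookkeeping on how ${\Pr}_{Y\times\R}(\epi\Psi^{c^{\prime}})$ differs from $\epi\inf_{x\in X}\Psi^{c^{\prime}}(x,\cdot)$ only spells out what the paper's terser ``if and only if'' argument leaves implicit.
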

\begin{proof}
According to Lemma \ref{lemma 3.3} and taking also in consideration Lemma \ref{lemma biconj1}, ${\Pr}_{Y\times\R}(\epi \Psi ^{c ^{\prime}} )$ is e-convex if and only if
${\Pr}_{Y\times\R}(\epi \Psi ^{c ^{\prime}} )=\epi \Psi((0,\cdot),(0,\cdot),\cdot)^{c^{\prime}}.$
In particular, this means that for all $y \in \dom \Psi ((0, \cdot), (0, \cdot), \cdot) ^{c ^{\prime}}$, there exists $\bar x \in X$ such that
$ \Psi ^{c ^{\prime}}   (\bar x, y) = \Psi((0,\cdot),(0,\cdot), \cdot)^{c{^{\prime}}}(y)$.
By Lemma \ref{lemma biconj1}, this is equivalent to
$ \Psi ^{c ^{\prime}}  (\bar x,y)= \min_{x \in X} \Psi ^{c ^{\prime}} (x, y)= \Psi((0,\cdot),(0,\cdot), \cdot)^{c^{\prime}}(y)$.
\end{proof}
The following corollary arises from Lemma \ref{lemma biconj1} and Proposition \ref{prop biconj2}.
\begin{corollary}\label{th52} It always holds
$$\left(\inf\limits_{x\in X}\Phi (x, \cdot)\right)^{cc^{\prime}}\leq \inf\limits_{x\in X} \Phi^{cc^{\prime}}(x, \cdot).$$
Moreover, if $\Phi$ is proper and e-convex, and $(\overline{C5})$ holds, then
$$\left(\inf\limits_{x\in X}\Phi (x, \cdot)\right)^{cc^{\prime}} = \min\limits_{x\in X} \Phi^{cc^{\prime}}(x, \cdot).$$
\end{corollary}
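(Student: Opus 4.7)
The plan is to read the corollary through the substitution $\Psi=\Phi^{c}$, which is the perturbation function of $(\overline{GP}_c)$ introduced in Section 3, and then to apply Lemma \ref{lemma biconj1} for the inequality and Proposition \ref{prop biconj2} for the equality.

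First I would record two identifications. By Theorem \ref{th:Theorem2.2} applied to $\Psi$ (which is $\ep$-convex), one has $\Psi^{c^{\prime}}=\Phi^{cc^{\prime}}$, so the right-hand side $\inf_{x\in X}\Phi^{cc^{\prime}}(x,\cdot)$ in the corollary coincides with $\inf_{x\in X}\Psi^{c^{\prime}}(x,\cdot)$. Next I would show that $(\inf_{x\in X}\Phi(x,\cdot))^{c}=\Psi((0,\cdot),(0,\cdot),\cdot)$: indeed, writing $p(y)=\inf_{x\in X}\Phi(x,y)$, a direct computation using the definition of $\bar c$ in \eqref{equation:def c bar} with $x^{\ast}=u^{\ast}=0$ gives
\begin{equation*}
p^{c}(y^{\ast},v^{\ast},\alpha)=\sup_{(x,y)\in X\times Y}\{\bar c((x,y),((0,y^{\ast}),(0,v^{\ast}),\alpha))-\Phi(x,y)\}=\Phi^{c}((0,y^{\ast}),(0,v^{\ast}),\alpha),
\end{equation*}
which is precisely the identity already used by the authors when presenting $(GD_c)$ through $p$. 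Taking the $c^{\prime}$-conjugate of both sides yields $(\inf_{x\in X}\Phi(x,\cdot))^{cc^{\prime}}=\Psi((0,\cdot),(0,\cdot),\cdot)^{c^{\prime}}$.

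With this translation, the first claim of the corollary becomes the conclusion of Lemma \ref{lemma biconj1} applied to $\Psi$, so the inequality follows immediately with no further work. For the equality under the assumption that $\Phi$ is proper and e-convex and that \eqref{eq:C5_barra} holds, I would observe that $(\overline{C5})$ makes ${\Pr}_{Y\times\R}(\epi\Psi^{c^{\prime}})$ both e-convex and functionally representable. In particular its e-convex hull equals itself and is therefore functionally representable, so the hypothesis of Proposition \ref{prop biconj2} is satisfied, and the e-convexity of ${\Pr}_{Y\times\R}(\epi\Psi^{c^{\prime}})$ is exactly the condition that triggers the equivalence. Proposition \ref{prop biconj2} then delivers
\begin{equation*}
\Psi((0,\cdot),(0,\cdot),\cdot)^{c^{\prime}}=\min_{x\in X}\Psi^{c^{\prime}}(x,\cdot),
\end{equation*}
and re-translating via the two identifications from the previous paragraph gives the desired equality, with the minimum being attained.

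There is essentially no obstacle here; the only point requiring care is the bookkeeping of the substitution $\Psi=\Phi^{c}$ (notably the small verification that $p^{c}=\Phi^{c}((0,\cdot),(0,\cdot),\cdot)$), and checking that ``e-convex and functionally representable'' implies the hypothesis ``$\eco{\Pr}_{Y\times\R}(\epi\Psi^{c^{\prime}})$ is functionally representable'' needed by Proposition \ref{prop biconj2}. Once these are in place, the corollary is a direct consequence of the two results cited in its statement, which is why it is appropriate to present it as a corollary rather than a theorem.
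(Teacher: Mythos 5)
Your proposal is correct and follows exactly the route the paper intends: the paper offers no separate proof, stating only that the corollary ``arises from'' Lemma \ref{lemma biconj1} and Proposition \ref{prop biconj2}, and your argument simply fills in that derivation via the identifications $\Psi^{c^{\prime}}=\Phi^{cc^{\prime}}$ and $p^{c}=\Psi((0,\cdot),(0,\cdot),\cdot)$, together with the observation that under \eqref{eq:C5_barra} the projection coincides with its e-convex hull so that Proposition \ref{prop biconj2} applies. (The appeal to Theorem \ref{th:Theorem2.2} for $\Psi^{c^{\prime}}=\Phi^{cc^{\prime}}$ is superfluous, since this holds by definition of $\Psi=\Phi^{c}$, but this is harmless.)
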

\begin{remark}
Despite the similarity between converse duality and strong duality, see for instance \cite{FV2016SSD}, the property of being functionally representable makes a difference between them. It is totally necessary since the e-convex envelope of a given epigraph is not, in general, an epigraph anymore.
\end{remark}

\section{$C$-subdifferentiability}
\label{se4}

\subsection{New results and application to total duality}

The subdifferentiability of a function at a point associated with the $c$-conjugation scheme was considered in \cite{MLVP2011} as follows.

\begin{definition}\label{def:c-subgradient}
Let $f:X\en \Ramp$ be a function. A vector $ (x^*, u^*, \alpha) \in W$ is a \textit{$c$-subgradient} of $f$ at $x_{0} \in X$ if  $f(x_{0}) \in \R, \langle x_{0}, u^* \rangle < \alpha$ and, for all $x \in X$,
\begin{equation*}
f(x)-f(x_{0}) \geq c(x,(x^*, u^*, \alpha))-c(x_{0}, (x^*, u^*, \alpha)).
\end{equation*}
The set of all the $c$-subgradients of $f$ at $x_{0}$ is denoted by $\partial_{c}f(x_{0})$ and is called the \textit{$c$-subdifferential set} of $f$ at $x_0$. In the case $f(x_{0}) \notin \R$, it is set $\partial_{c}f(x_{0})=\emptyset$.
\end{definition}

We also denote by $\partial$ the classical (convex) subdifferential. We state now the counterpart of \cite[Prop.~5.1 (Ch.~I)]{ET1976} for $c$-subdifferentials.

\begin{lemma} \label{lemma 5}
Let $f:X\en \Ramp$ be a function and $x_{0} \in \dom f$. Then  $ (x^*,u^*, \alpha) \in  \partial_{c}f(x_{0})$ if and only if $\langle x_{0}, u^* \rangle<\alpha$ and
$f(x_{0}) +f^{c} (x^*,u^*, \alpha)= c(x_{0},(x^*,u^*, \alpha))$.
\end{lemma}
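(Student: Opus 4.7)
The plan is to proceed exactly as in the proof of the classical Fenchel--Young equality characterization of the subdifferential, replacing the bilinear duality pairing by the coupling $c$ defined in \eqref{equation: def c and c prime}. The key observation that makes this routine is that, under the strict inequality $\langle x_{0},u^{\ast }\rangle <\alpha $, one has $c(x_{0},(x^{\ast },u^{\ast },\alpha ))=\langle x_{0},x^{\ast }\rangle \in \R$, so the coupling value at $x_{0}$ is finite and the extended-real arithmetic conventions play only an auxiliary role.

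For the ``only if'' direction, I assume $(x^{\ast },u^{\ast },\alpha )\in \partial _{c}f(x_{0})$. By Definition \ref{def:c-subgradient} this gives $f(x_{0})\in \R$, $\langle x_{0},u^{\ast }\rangle <\alpha $, and the inequality
\begin{equation*}
c(x,(x^{\ast },u^{\ast },\alpha ))-f(x)\leq c(x_{0},(x^{\ast },u^{\ast },\alpha ))-f(x_{0})\quad \text{for all } x\in X.
\end{equation*}
Taking the supremum over $x\in X$ I obtain $f^{c}(x^{\ast },u^{\ast },\alpha )\leq c(x_{0},(x^{\ast },u^{\ast },\alpha ))-f(x_{0})$. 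The reverse inequality is the Fenchel--Young-type inequality, which holds directly from the definition of $f^{c}$ by instantiating the sup at $x_{0}$. Combining both gives the stated equality.

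For the ``if'' direction, I assume $\langle x_{0},u^{\ast }\rangle <\alpha $ and $f(x_{0})+f^{c}(x^{\ast },u^{\ast },\alpha )=c(x_{0},(x^{\ast },u^{\ast },\alpha ))$. Since the right-hand side equals $\langle x_{0},x^{\ast }\rangle \in \R$, I first argue that $f(x_{0})\in \R$: using $x_{0}\in \dom f$ we have $f(x_{0})<+\infty $, and if $f(x_{0})=-\infty $, the Fenchel--Young inequality applied at $x_{0}$ forces $f^{c}(x^{\ast },u^{\ast },\alpha )=+\infty $, so by the conventions adopted after \eqref{equation: def c and c prime} the sum on the left would equal $-\infty $, contradicting the finiteness of the right-hand side. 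With $f(x_{0})\in \R$ at hand, the definition of $f^{c}$ yields, for every $x\in X$,
\begin{equation*}
c(x,(x^{\ast },u^{\ast },\alpha ))-f(x)\leq f^{c}(x^{\ast },u^{\ast },\alpha )=c(x_{0},(x^{\ast },u^{\ast },\alpha ))-f(x_{0}),
\end{equation*}
which rearranged is exactly the $c$-subgradient inequality from Definition \ref{def:c-subgradient}.

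I do not anticipate any real obstacle here; the only delicate point is the bookkeeping with the conventions $(+\infty )+(-\infty )=-\infty $, etc., which has to be checked to justify that the finiteness of $f(x_{0})$ required in Definition \ref{def:c-subgradient} is indeed implied by the assumed conjugate equality together with $\langle x_{0},u^{\ast }\rangle <\alpha $.
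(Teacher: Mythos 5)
Your proof is correct and follows essentially the same route as the paper: the Fenchel--Young-type inequality $f(x_0)+f^c(x^*,u^*,\alpha)\geq c(x_0,(x^*,u^*,\alpha))$ always holds, so the $c$-subgradient inequality is equivalent, via taking the supremum over $x$, to the reverse inequality and hence to the stated equality. Your extra bookkeeping in the ``if'' direction (ruling out $f(x_0)=-\infty$ using the conventions) is a point the paper's shorter proof glosses over, but it is consistent with, not different from, the paper's argument.
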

\begin{proof}
Let $ (x^*, u^*, \alpha) \in  \partial_{c}f(x_{0})$.
This means $\langle x_{0},u^* \rangle<\alpha$ and
\begin{equation*}
f^{c} (x^*, u^*, \alpha) =\sup_{ x\in X} \{c(x,(x^*, u^*, \alpha))-f(x)\}\leq c(x_{0},(x^*, u^*, \alpha))- f(x_{0}).
\end{equation*}
This means that
$f(x_{0}) +f^{c} (x^*, u^*, \alpha)\leq c(x_{0},(x^*, u^*, \alpha))$,
which concludes the proof, since the opposite inequality is always true.
\end{proof}

We extend properties of subdifferentiability to $c$-subdifferentiability, introducing previously the definition of the $c^{\prime}$-subgradient of a function $g\colon W\to\Ramp$. Next definition allows us to generalize a well-known result for convex and lsc functions  -- see, for instance,  \cite[Th.~2.4.4]{Z2002} -- for e-convex functions. Note also that in \cite{VPV} the (convex) subdifferential of an e-convex function is considered.

\begin{definition}\label{def:cprime-subgradient}
Let $g:W\en \Ramp$ be a function. Then, $ x \in X$ is a \textit{$c^{\prime}$-subgradient} of $g$ at $(x^*_0,u^*_0,\alpha_0) \in W$ if  $g(x) \in \R, \langle x, u_0^* \rangle < \alpha$ and, for all $(x^*,u^*,\alpha) \in W$,
\begin{equation*}
	g(x^*,u^*,\alpha)-g(x^*_0,u^*_0,\alpha_0) \geq c^{\prime}((x^*, u^*, \alpha),x)-c^{\prime}((x^*_0, u^*_0, \alpha_0),x).
\end{equation*}
The set of all the $c^{\prime}$-subgradients of $g$ at $(x^*_0,u^*_0,\alpha_0)$ is denoted by $\partial_{c^{\prime}}g(x^*_0,u^*_0,\alpha_0)$.
 In the case $g(x^*_0,u^*_0,\alpha_0) \notin \R$, it is set $\partial_{c^{\prime}}g(x^*_0,u^*_0,\alpha_0)=\emptyset$.
\end{definition}

We present the counterpart of Lemma \ref{lemma 5} for functions $g\colon W\to \Ramp$.

\begin{lemma} \label{lemma:lemma4_5}
Let $g\colon W\to\Ramp$ be a function and $(x^*_0,u^*_0,\alpha_0)\in\dom g$.
Then $x\in\partial_{c^{\prime}}g(x^*_0,u^*_0,\alpha_0)$ if and only if $\ci x,u^*_0\cd <\alpha_0$ and
\begin{equation}\label{eq:Equality_cprime_subdifferential}
	g(x^*_0,u^*_0,\alpha_0) + g^{c^{\prime}}(x) = c^{\prime}((x^*_0,u^*_0,\alpha_0),x).
\end{equation}
\end{lemma}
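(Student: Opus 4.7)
The plan is to mirror the argument the authors used for Lemma \ref{lemma 5}, swapping the roles of $c$ and $c^{\prime}$, and of $X$ and $W$. The statement is a direct consequence of the definitions of $c^{\prime}$-subgradient and of the $c^{\prime}$-conjugate, together with the Fenchel-Young-type inequality
\[
g(x^{\ast}_0,u^{\ast}_0,\alpha_0) + g^{c^{\prime}}(x) \;\ge\; c^{\prime}((x^{\ast}_0,u^{\ast}_0,\alpha_0),x),
\]
which is built into the definition of $g^{c^{\prime}}$ and holds for every $(x^{\ast}_0,u^{\ast}_0,\alpha_0)\in W$ and every $x\in X$ (with the usual conventions for $\pm\infty$).

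For the forward implication, I would assume $x\in\partial_{c^{\prime}}g(x^{\ast}_0,u^{\ast}_0,\alpha_0)$. Then by Definition \ref{def:cprime-subgradient} we already have $\langle x,u^{\ast}_0\rangle<\alpha_0$ (so $c^{\prime}((x^{\ast}_0,u^{\ast}_0,\alpha_0),x)=\langle x,x^{\ast}_0\rangle\in\R$), and rearranging the subgradient inequality yields, for every $(x^{\ast},u^{\ast},\alpha)\in W$,
\[
c^{\prime}((x^{\ast},u^{\ast},\alpha),x)-g(x^{\ast},u^{\ast},\alpha) \;\le\; c^{\prime}((x^{\ast}_0,u^{\ast}_0,\alpha_0),x)-g(x^{\ast}_0,u^{\ast}_0,\alpha_0).
\]
Taking the supremum over $(x^{\ast},u^{\ast},\alpha)\in W$ on the left-hand side produces $g^{c^{\prime}}(x)$, and together with the Fenchel-Young inequality above this pins down \eqref{eq:Equality_cprime_subdifferential}.

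For the converse, I would suppose $\langle x,u^{\ast}_0\rangle<\alpha_0$ and that \eqref{eq:Equality_cprime_subdifferential} holds. The definition of $g^{c^{\prime}}$ gives, for every $(x^{\ast},u^{\ast},\alpha)\in W$,
\[
g^{c^{\prime}}(x) \;\ge\; c^{\prime}((x^{\ast},u^{\ast},\alpha),x)-g(x^{\ast},u^{\ast},\alpha),
\]
and substituting \eqref{eq:Equality_cprime_subdifferential} on the left produces exactly the $c^{\prime}$-subgradient inequality of Definition \ref{def:cprime-subgradient}.

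I do not expect any real obstacle: the only subtle points are the interpretation of $g(x)\in\R$ in Definition \ref{def:cprime-subgradient} (which in context should read $g(x^{\ast}_0,u^{\ast}_0,\alpha_0)\in\R$, automatically ensured by $(x^{\ast}_0,u^{\ast}_0,\alpha_0)\in\dom g$ once one additionally observes $g^{c^{\prime}}(x)\in\R$ in \eqref{eq:Equality_cprime_subdifferential}) and the need to keep the strict inequality $\langle x,u^{\ast}_0\rangle<\alpha_0$ visible throughout so that the coupling $c^{\prime}$ avoids its $+\infty$ branch and reduces to the bilinear pairing.
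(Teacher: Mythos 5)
Your proposal is correct and follows essentially the same route as the paper's proof: rearrange the $c^{\prime}$-subgradient inequality, pass to the supremum over $(x^{\ast},u^{\ast},\alpha)\in W$ to identify $g^{c^{\prime}}(x)$, and invoke the always-valid Fenchel--Young-type inequality to obtain equality, with the converse read off from the same chain. Your explicit remarks on the finiteness of $g(x^{\ast}_0,u^{\ast}_0,\alpha_0)$ and on keeping $\langle x,u^{\ast}_0\rangle<\alpha_0$ in view are consistent with (and slightly more careful than) the paper's treatment.
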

\begin{proof}
Let $x\in\partial_{c^{\prime}} g(x^*_0,u^*_0,\alpha_0)$.
This amounts to saying that $\ci x,u^*_0\cd < \alpha_0$ and
\begin{equation*}
	c^{\prime} ((x^*_0,u^*_0,\alpha_0),x) - g(x^*_0,u^*_0,\alpha_0) \geq c^{\prime}((x^*,u^*,\alpha),x) - g(x^*,u^*,\alpha)
\end{equation*}
for all $(x^*,u^*,\alpha)\in W$.
Then, this is equivalent to saying that
$$
	c^{\prime} ((x^*_0,u^*_0,\alpha_0),x) - g(x^*_0,u^*_0,\alpha_0) \geq
\sup_{(x^*,u^*,\alpha)\in W} \{c^{\prime}((x^*,u^*,\alpha),x) - g(x^*,u^*,\alpha)\} = g^{c^{\prime}}(x).
$$
Taking into account that the converse inequality always holds, we get (\ref{eq:Equality_cprime_subdifferential}).
\end{proof}

Next is the counterpart to \cite[Cor.~23.5.1]{R1970} via the c-conjugation scheme.

\begin{proposition} \label{prop:Proposition4_6}
Let $f\colon X\to \Ramp$ and $x \in \dom f$. If $(x^*,u^*,\alpha) \in \partial_c f(x)$ then $ x \in \partial_{c^{\prime}} f^c(x^*,u^*,\alpha)$
and the converse statement holds if $f$ is e-convex.
\end{proposition}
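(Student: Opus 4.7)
My plan is to reduce both implications to the two characterization lemmas just proved (Lemmas \ref{lemma 5} and \ref{lemma:lemma4_5}) and invoke the Fenchel-Moreau type statement of Theorem \ref{th:Theorem2.2}, exploiting the fact that by definition \eqref{equation: def c and c prime} the coupling $c'$ is nothing but $c$ with swapped arguments.

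\textbf{Forward direction.} Assume $(x^*,u^*,\alpha)\in\partial_c f(x)$. Lemma \ref{lemma 5} yields $\ci x,u^*\cd<\alpha$ and
\[
f(x)+f^c(x^*,u^*,\alpha)=c(x,(x^*,u^*,\alpha))=c^{\prime}((x^*,u^*,\alpha),x).
\]
In particular, since $f(x)\in\R$ and $c^{\prime}((x^*,u^*,\alpha),x)=\ci x,x^*\cd\in\R$, this forces $f^c(x^*,u^*,\alpha)\in\R$. By Theorem \ref{th:Theorem2.2}(iv) we have $f^{cc^{\prime}}(x)\leq f(x)$, so
\[
f^{cc^{\prime}}(x)+f^c(x^*,u^*,\alpha)\leq c^{\prime}((x^*,u^*,\alpha),x),
\]
while the reverse (Young-type) inequality always holds by definition of the $c^{\prime}$-conjugate of $f^c$. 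Therefore equality holds, and since $\ci x,u^*\cd<\alpha$ is also available, Lemma \ref{lemma:lemma4_5} applied to $g:=f^c$ gives $x\in\partial_{c^{\prime}}f^c(x^*,u^*,\alpha)$.

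\textbf{Converse direction (assuming $f$ e-convex).} Assume $x\in\partial_{c^{\prime}}f^c(x^*,u^*,\alpha)$. Lemma \ref{lemma:lemma4_5} yields $f^c(x^*,u^*,\alpha)\in\R$, $\ci x,u^*\cd<\alpha$, and
\[
f^c(x^*,u^*,\alpha)+f^{cc^{\prime}}(x)=c^{\prime}((x^*,u^*,\alpha),x)=c(x,(x^*,u^*,\alpha)).
\]
Since $f$ is e-convex, Theorem \ref{th:Theorem2.2}(iii) gives $f^{cc^{\prime}}=f$, so
\[
f(x)+f^c(x^*,u^*,\alpha)=c(x,(x^*,u^*,\alpha)),
\]
and $f(x)\in\R$ because $f^c(x^*,u^*,\alpha)$ and $c(x,(x^*,u^*,\alpha))$ are real. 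Applying Lemma \ref{lemma 5} we conclude $(x^*,u^*,\alpha)\in\partial_c f(x)$.

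\textbf{Where the content sits.} There is no real obstacle: the proposition is essentially a symmetry statement, since $c$ and $c^{\prime}$ coincide after swapping slots, and the subdifferential characterizations in Lemmas \ref{lemma 5} and \ref{lemma:lemma4_5} are identical Young-type equalities. The asymmetry in the statement comes entirely from the asymmetry $f^{cc^{\prime}}\leq f$ (always) vs.\ $f^{cc^{\prime}}=f$ (only under e-convexity); that is precisely why the converse requires $f$ to be e-convex, and the only delicate point to check during write-up is that the strict-inequality conditions $\ci x,u^*\cd<\alpha$ needed for membership in both $\partial_c f(x)$ and $\partial_{c^{\prime}}f^c(x^*,u^*,\alpha)$ transfer correctly in each direction — which they do, since the condition is literally the same in both cases.
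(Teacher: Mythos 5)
Your proof is correct and follows essentially the same route as the paper's: apply Lemma \ref{lemma 5}, use $c(x,(x^*,u^*,\alpha))=c^{\prime}((x^*,u^*,\alpha),x)$ together with $f^{cc^{\prime}}\leq f$ (equality under e-convexity by Theorem \ref{th:Theorem2.2}), and conclude via Lemma \ref{lemma:lemma4_5}. The only difference is that you spell out the finiteness of $f^c(x^*,u^*,\alpha)$ and the reverse Young-type inequality explicitly, details the paper leaves implicit.
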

\begin{proof}
Let $(x^*,u^*,\alpha)\in\partial_c f(x)$. By Lemma \ref{lemma 5}, $\ci x,u^*\cd < \alpha$ and,
\begin{equation}\label{eq:firstequality}
	f (x) + f^c(x^*,u^*,\alpha) = c(x_0,(x^*,u^*,\alpha)).
\end{equation}
As $c(x,(x^*,u^*,\alpha)) = c^{\prime}((x^*,u^*,\alpha),x)$ and $(f^c)^{c^{\prime}} \leq f$, we get
\begin{equation}\label{eq:secondequality}
	(f^c)^{c^{\prime}}(x) + f^c(x^*,u^*,\alpha) \leq c^{\prime}((x^*,u^*,\alpha),x)
\end{equation}
which means that $x \in \partial_{c^{\prime}} f^c(x^*_0,v^*_0,\alpha_0)$ according to Lemma \ref{lemma:lemma4_5}.
Now, in the case $f$ is e-convex, we have $(f^c)^{c^{\prime}}= f$, hence (\ref{eq:firstequality}) and (\ref{eq:secondequality}) are equivalent.
\end{proof}

We apply these results to \textit{total duality} for $(GP)-(GD{_c})$, that is
\begin{equation*}
\min_{x \in X} \Phi (x,0)=\max_{y^*, v^* \in X, \alpha>0} \{- \Phi ^{c}  (((0,y^*),0,v^*),\alpha)\},
\end{equation*}
i.e. the situation when both the primal and the dual have optimal solutions and their optimal values coincide. In the classical setting (see \cite {ET1976}), total duality for $(GP)-(GD)$ and finiteness of both optimal values amounts to the existence of a point $(\bar x, \bar y^*) \in X \times Y^*$ verifying
$(0, \bar y^*) \in \partial \Phi ( \bar x,0)$, or, equivalently, see \cite[Th.~23.5]{R1970},
$\Phi(\bar x,0)+\Phi^*(0, \bar y ^*)=0$,
 being, in that case, $\bar x$ an optimal solution of $(GP)$ and $\bar y^*$ an optimal solution of $(GD)$.

\begin{proposition} \label{proposition 4.3}
Let $\Phi:X\times Y\en\Ramp$, $\bar x \in X$ and $(\bar y^*, \bar v^*, \bar \alpha) \in Y^* \times Y^* \times \R$. Then $((0,\bar y^*),(0,\bar v^*),\bar \alpha) \in \partial_{c} \Phi (\bar x, 0)$ if and only if $\bar x$ is an optimal solution to $(GP), (\bar y^*, \bar v^*, \bar \alpha)$ a solution to $(GD_{c})$ and $v(GP)=v(GD_{c}) \in \R$.
\end{proposition}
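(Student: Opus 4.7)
The plan is to reduce the statement to a direct application of Lemma \ref{lemma 5} combined with the weak duality inequality linking $(GP)$ and $(GD_c)$. The key observation is that when one evaluates the coupling function $\bar c$ on the specific point $((\bar x, 0),((0,\bar y^*),(0,\bar v^*),\bar\alpha))$, the pairing $\ci (\bar x,0),(0,\bar v^*)\cd$ equals $0$, so the feasibility-type strict inequality in Definition \ref{def:c-subgradient} collapses to the single condition $\bar\alpha >0$, and the value $\bar c((\bar x,0),((0,\bar y^*),(0,\bar v^*),\bar\alpha))$ equals $\ci \bar x,0\cd + \ci 0,\bar y^*\cd = 0$. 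Thus by Lemma \ref{lemma 5} the c-subgradient condition $((0,\bar y^*),(0,\bar v^*),\bar\alpha)\in\partial_c\Phi(\bar x,0)$ is equivalent to
\begin{equation*}
	\bar\alpha>0,\qquad \Phi(\bar x,0)\in\R,\qquad \Phi(\bar x,0)+\Phi^c((0,\bar y^*),(0,\bar v^*),\bar\alpha)=0.
\end{equation*}

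For the forward implication, I would first note that weak duality, which was already observed in Section \ref{se3} as a consequence of the Fenchel-Young inequality for the coupling $\bar c$, gives $\Phi(x,0)\geq -\Phi^c((0,y^*),(0,v^*),\alpha)$ for every feasible primal $x$ and every feasible dual triple $(y^*,v^*,\alpha)$ with $\alpha>0$. Since the above equivalent reformulation yields $\Phi(\bar x,0)=-\Phi^c((0,\bar y^*),(0,\bar v^*),\bar\alpha)\in\R$ together with the feasibility condition $\bar\alpha>0$, weak duality forces the chain
\begin{equation*}
v(GP)\leq\Phi(\bar x,0)=-\Phi^c((0,\bar y^*),(0,\bar v^*),\bar\alpha)\leq v(GD_c)\leq v(GP),
\end{equation*}
so that all terms coincide and lie in $\R$, and $\bar x$, respectively $(\bar y^*,\bar v^*,\bar\alpha)$, is optimal for $(GP)$, respectively $(GD_c)$.

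For the reverse implication, assuming $\bar x$ solves $(GP)$, that $(\bar y^*,\bar v^*,\bar\alpha)$ solves $(GD_c)$, and $v(GP)=v(GD_c)\in\R$, dual feasibility yields $\bar\alpha>0$, so that $\ci (\bar x,0),(0,\bar v^*)\cd=0<\bar\alpha$. The equality of optimal values reads exactly $\Phi(\bar x,0)+\Phi^c((0,\bar y^*),(0,\bar v^*),\bar\alpha)=0$, which together with $\Phi(\bar x,0)\in\R$ is precisely the equality in Lemma \ref{lemma 5}, yielding the desired c-subgradient inclusion.

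I do not anticipate a real obstacle; the argument is a bookkeeping exercise around Lemma \ref{lemma 5}. The only point deserving a moment of care is matching the strict-inequality condition in Definition \ref{def:c-subgradient} with the dual feasibility constraint $\alpha>0$, which is precisely what happens in the degenerate pairing with the zero second coordinate, and checking that the finiteness requirement $\Phi(\bar x,0)\in\R$ is automatic from $v(GP)\in\R$ under optimality of $\bar x$.
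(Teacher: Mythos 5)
Your proposal is correct and follows essentially the same route as the paper's own proof: both directions rest on Lemma \ref{lemma 5} (noting that the degenerate pairing makes the strict-inequality condition collapse to $\bar\alpha>0$ and the coupling value to $0$), combined with the weak duality chain $v(GP)\leq\Phi(\bar x,0)=-\Phi^c((0,\bar y^*),(0,\bar v^*),\bar\alpha)\leq v(GD_c)\leq v(GP)$. No gaps; your explicit bookkeeping of the feasibility and finiteness conditions is, if anything, slightly more careful than the paper's.
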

\begin{proof}
If $((0,\bar u^*),(0,\bar v^*),\bar \alpha) \in \partial_{c} \Phi (\bar x, 0)$, then an application of Lemma  \ref {lemma 5} implies
$\Phi(\bar x,0) $ $+\Phi ^{c} ((0,\bar y^*),(0,\bar v^*),\bar \alpha)$ $=0$, with $\bar\alpha >0$. Further
\begin{equation}
v(GP) \leq  \Phi(\bar x,0)=-\Phi ^{c} ((0,\bar y^*),(0,\bar v^*),\bar \alpha) \leq v(GD_c) \leq v(GP), \label{ecuacion_15}
\end{equation}
in such a way that  $v(GP)=v(GD_c)\in \R$ and both problems are solvable.
Assuming \eqref{ecuacion_15} true, we get
$\Phi(\bar x,0) +\Phi ^{c} ((0,\bar y^*),(0,\bar v^*),\bar \alpha) =0$
and, as $(\bar y^*, \bar v^*, \bar \alpha)$ is an optimal solution of $(GD{_c})$, it holds $\bar\alpha >0$. Hence, by Lemma \ref{lemma 5}, $((0,\bar y^*),(0,\bar v^*),\bar \alpha) \in \partial_{c} \Phi (\bar x, 0)$.
\end{proof}
\begin{remark}
Notice that for $\Phi$ e-convex, a necessary and sufficient condition for total duality for $(GP)-(GD_c)$ is (by Propositions \ref{prop:Proposition4_6} and \ref{proposition 4.3}) the existence of  $\bar x \in X, (\bar y^*, \bar v^*, \bar \alpha) \in Y^* \times Y^* \times \R$ verifying  $(\bar x, 0) \in  \partial_{c^{\prime}} \Phi (\bar y^*, \bar v^*, \bar \alpha)$.
\end{remark}

\subsection{$\varepsilon$-c-subdifferentiability}

Next we extend the characterizations of $\varepsilon$-subdifferential formulae for convex functions from \cite{BGS} to the current e-convex setting. First recall the definition of the $\varepsilon$-$c$-subdifferential of a function $f:X\en \Ramp$ from \cite[Def.~4]{FVR2012}.
\begin{definition}
	A vector $ (x^*, u^*, \alpha) \in W$ is an \textit{$\varepsilon$-$c$-subgradient} of $f$ at $\bar x \in X$ if  $f(\bar x) \in \R, \langle \bar x, u^* \rangle < \alpha$ and, for all $x \in X$,
\begin{equation}
f(x)-f(\bar x) \geq c(x,(x^*, u^*, \alpha))-c(\bar x, (x^*, u^*, \alpha))-\varepsilon.\label{definition eps-c-subgradient}
\end{equation}
The set of all the $\varepsilon$-$c$-subgradients of $f$ at $\bar x$ is denoted by $\partial_{c,\varepsilon}f(\bar x)$ and is called the \textit{$\varepsilon$-$c$-subdifferential set} of $f$ at $\bar x$. If $f(\bar x) \notin \R$, take $\partial_{c, \varepsilon}f(\bar x)=\emptyset$.
\end{definition}
Note that \eqref{definition eps-c-subgradient} amounts to $f(x_0)+f^c(x^*,u^*,\alpha) \leq c(x_0,(x^*,u^*,\alpha))+\varepsilon$.

In \cite[Th.~3.1]{BGS} the $\varepsilon$-subdifferential of the objective function of $(GP)$ is characterized by means of the $\varepsilon$-subdifferential of the considered perturbation function via a lsc regularity condition. Its e-convex counterpart follows.

\begin{theorem}\label{th53} Let $\Phi$ be e-convex. For all $\varepsilon \geq 0$ and all $x\in X$,
$$\partial _{c,\varepsilon}\Phi (\cdot, 0)(x)=\czp_{\eta >0} {\Pr}_{X^*\times X^*\times \R} (\partial_{c,\varepsilon + \eta}\Phi (x, 0))$$
holds if and only if the function $\inf_{y^*, v^*\in Y^*}\Phi^c ((\cdot, y^*), (\cdot, v^*), \cdot)$ is e$^{\prime}$-convex.
\end{theorem}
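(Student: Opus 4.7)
The plan is to encode both sides of the claimed set identity as pointwise inequalities involving two functions on $W$, and then apply the $c$/$c^{\prime}$-conjugation machinery of Theorem \ref{th:Theorem2.2} to tie the resulting equality to e$^{\prime}$-convexity. Let $\psi(x^*,u^*,\alpha) := \inf_{y^*,v^*\in Y^*}\Phi^c((x^*,y^*),(u^*,v^*),\alpha)$.

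First I would translate both membership conditions into inequalities. By the $\varepsilon$-version of Lemma \ref{lemma 5}, $(x^*,u^*,\alpha) \in \partial_{c,\varepsilon}\Phi(\cdot,0)(x)$ is equivalent to $\langle x,u^*\rangle < \alpha$ together with
\begin{equation*}
\Phi(x,0) + (\Phi(\cdot,0))^c(x^*,u^*,\alpha) \leq c(x,(x^*,u^*,\alpha)) + \varepsilon.
\end{equation*}
Because $\bar c((x,0),((x^*,y^*),(u^*,v^*),\alpha)) = c(x,(x^*,u^*,\alpha))$ whenever $\langle x,u^*\rangle<\alpha$, the same lemma applied to $\Phi$ at $(x,0)$ characterises $(x^*,u^*,\alpha) \in {\Pr}_{X^*\times X^*\times \R}(\partial_{c,\varepsilon+\eta}\Phi(x,0))$ as the existence of $y^*,v^*\in Y^*$ with $\Phi(x,0) + \Phi^c((x^*,y^*),(u^*,v^*),\alpha) \leq c(x,(x^*,u^*,\alpha)) + \varepsilon + \eta$. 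Intersecting over $\eta > 0$ turns this into $\Phi(x,0) + \psi(x^*,u^*,\alpha) \leq c(x,(x^*,u^*,\alpha)) + \varepsilon$. Since the proof of Theorem \ref{th51} already gives $(\Phi(\cdot,0))^c \leq \psi$, the right-hand side of the claimed identity is automatically contained in the left-hand side; equality for every $x \in X$ and every $\varepsilon \geq 0$ is then equivalent to the pointwise identity $\psi = (\Phi(\cdot,0))^c$ on $W$, as one sees by testing at a point $\bar x \in \dom\Phi(\cdot,0)$ nearly attaining the supremum defining $(\Phi(\cdot,0))^c(x^*,u^*,\alpha)$ and choosing $\varepsilon$ equal to the corresponding defect.

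The second step is to identify $(\Phi(\cdot,0))^c$ as the e$^{\prime}$-convex hull of $\psi$. Since $c^{\prime}((x^*,u^*,\alpha),x) = \bar c^{\prime}(((x^*,y^*),(u^*,v^*),\alpha),(x,0))$ independently of $y^*, v^*$, interchanging $\sup$ with $-\inf$ yields
\begin{equation*}
\psi^{c^{\prime}}(x) = \sup_{((x^*,y^*),(u^*,v^*),\alpha)}\bigl\{ \bar c^{\prime}(((x^*,y^*),(u^*,v^*),\alpha),(x,0)) - \Phi^c((x^*,y^*),(u^*,v^*),\alpha) \bigr\} = \Phi^{cc^{\prime}}(x,0).
\end{equation*}
As $\Phi$ is e-convex, Theorem \ref{th:Theorem2.2}(iii) collapses this to $\psi^{c^{\prime}} = \Phi(\cdot,0)$, whence $\psi^{c^{\prime}c} = (\Phi(\cdot,0))^c$. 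By Theorem \ref{th:Theorem2.2}(ii) this last function is exactly $\epco\psi$, so $\epco\psi = (\Phi(\cdot,0))^c$. Consequently $\psi$ is e$^{\prime}$-convex if and only if $\psi = \epco\psi = (\Phi(\cdot,0))^c$, and combining this with the first step yields both implications of the theorem.

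The main obstacle I anticipate is the $\subseteq$ direction of the reduction in the first step, i.e.\ deducing $\psi \leq (\Phi(\cdot,0))^c$ pointwise from the set identity: one must tailor the test pair $(\bar x,\varepsilon)$ so that the defining inequality is exactly saturated on the left while being violated on the right, and the degenerate cases in which $(\Phi(\cdot,0))^c$ takes an infinite value or the supremum defining it is not attained need separate, short applications of Fenchel--Young; the rest amounts to a clean bookkeeping of the coupling functions.
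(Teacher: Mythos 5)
Your proposal is correct and follows essentially the same route as the paper's proof: both reduce membership in each side of the identity to the Fenchel--Young-type inequalities $\Phi(x,0)+(\Phi(\cdot,0))^c\leq c+\varepsilon$ and $\Phi(x,0)+\inf_{y^*,v^*}\Phi^c\leq c+\varepsilon$ (the latter via near-minimizers and letting $\eta\to0$), and then exploit the fact that $(\Phi(\cdot,0))^c$ is the e$^{\prime}$-convex hull of $\inf_{y^*,v^*}\Phi^c((\cdot,y^*),(\cdot,v^*),\cdot)$. The only cosmetic difference is that you rederive this hull identity through the computation $\psi^{c^{\prime}}=\Phi^{cc^{\prime}}(\cdot,0)=\Phi(\cdot,0)$ (essentially the argument of Theorem \ref{th51} and Lemma \ref{lemma 3.2}), whereas the paper simply cites \cite[Lem.~5.2]{F2015}, and you make explicit the degenerate-case bookkeeping that the paper leaves implicit.
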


\begin{proof}  $\Phi (\cdot, 0)^c$ is the $e^{\prime}$-convex hull of $\inf_{y^*, v^*\in Y^*}\Phi^c((\cdot, y^*), (\cdot, v^*), \cdot)$, according to \cite[Lem.~5.2]{F2015}. Consequently,
the fact that $\inf_{y^*, v^*\in Y^*}\Phi^c ((\cdot, y^*), \cdot, v^*), \cdot)$ is e$^{\prime}$-convex can be equivalently characterized by the inclusion
$\epi((\Phi(\cdot, 0))^c)\subseteq \epi \big(\inf_{y^*, v^*\in Y^*}\Phi^c  ((\cdot, y^*), (\cdot, v^*), \cdot))$, too. Note also that one has
$$\partial _{c,\varepsilon}\Phi (\cdot, 0)(x) \supseteq \czp_{\eta >0} {\Pr}_{X^*\times X^*\times \R} (\partial_{c,\varepsilon + \eta}\Phi (x, 0))$$ even without taking $\Phi$ to be e-convex.

In first place, take an arbitrary pair $((x^*, u^*, \alpha), r)\in \epi(\Phi(\cdot, 0)^c)$, i.e. $\Phi(\cdot, 0)^c (x^*, u^*, \alpha)\leq r$. Let $x\in \dom (\Phi (\cdot, 0))$ and $\varepsilon= r+ \Phi (x, 0)- c(x, (x^*, u^*, \alpha))$ $\geq 0$.
Then $(\Phi(\cdot, 0))^c(x^*, u^*, \alpha) + \Phi (x, 0)\leq c(x, (x^*, u^*, \alpha)) + \varepsilon,$
 i.e. $(x^*, u^*, \alpha)\in \partial _{c,\varepsilon}\Phi (\cdot, 0)(x)$. Using the hypothesis, whenever $\eta >0$ there exist
$y^*_{\eta}, v^*_{\eta}\in Y^*$ for which $((x^*, y^*_{\eta}), (u^*, v^*_{\eta}), \alpha)\in \partial_{c,\varepsilon + \eta}\Phi (x, 0)$. Fixing $\eta>0$, we get
\begin{equation*}
	\Phi (x, 0) + \Phi^c((x^*, y^*_{\eta}), (u^*, v^*_{\eta}), \alpha) \leq
{\bar c}((x, 0), ((x^*, y^*_{\eta}), (u^*, v^*_{\eta}), \alpha)) + \varepsilon + \eta,
\end{equation*}
followed by
$$\Phi (x, 0) + \inf_{y^*, v^*\in Y^*}\Phi^c ((x^*, u^*), (y^*, v^*), \alpha) \leq c(x, (x^*, u^*, \alpha)) + \varepsilon + \eta,\quad \forall \eta > 0.$$
Letting $\eta$ tend towards 0 and taking into consideration the value of $\varepsilon$, it follows
$$\Phi (x, 0) + \inf_{y^*, v^*\in Y^*}\Phi^c ((x^*, u^*), (y^*, v^*), \alpha) \leq r+ \Phi (x, 0),$$
i.e.$((x^*, u^*, \alpha), r)\in  \epi \big(\inf_{y^*, v^*\in Y^*}\Phi^c ((\cdot,y^* ), (\cdot, v^*), \cdot)\big)$.

To show the converse statement,  let $\varepsilon \geq 0$ and $x\in X$. If $\Phi (x, 0)=+\infty$ one has $\partial _{c,\varepsilon}\Phi (\cdot, 0)(x)=\partial_{c,\varepsilon + \eta}\Phi (x, 0)= \emptyset$ for all $\eta >0$.

Assume further that $\Phi (x, 0)\in \R$. For $((x^*, u^*, \alpha), r)\in \partial_{c,\varepsilon}\Phi (\cdot, 0)(x)$, one has
$\Phi(\cdot, 0)^c(x^*, u^*, \alpha) + \Phi (x, 0)\leq c(x, (x^*, u^*, \alpha)) + \varepsilon.$
Since the hypothesis means that $\Phi(\cdot, 0)^c =\inf_{y^*, v^*\in Y^*}\Phi^c ((\cdot, y^*), (\cdot, v^*), \cdot)$, one obtains
$$\Phi (x, 0) + \inf_{y^*, v^*\in Y^*}\Phi^c ((x^*, y^*), (u^*, v^*), \alpha)  \leq c(x, (x^*, u^*, \alpha)) + \varepsilon.$$
Moreover, fixing $\eta >0$ there exist $y^*_{\eta}, v^*_{\eta}\in Y^*$ such that
$$\Phi (x, 0) + \Phi^c ((x^*,y^*_{\eta}), ( u^*, v^*_{\eta}), \alpha) \leq {\bar c}((x, 0), ((x^*, y^*_{\eta}), (u^*, v^*_{\eta}), \alpha)) + \varepsilon + \eta,$$
i.e. $((x^*, y^*_{\eta}),(u^* , v^*_{\eta}), \alpha)\in \partial_{c,\varepsilon + \eta}\Phi (x, 0)$, which yields the conclusion.
\end{proof}

A simpler formula for the $\varepsilon$-subdifferential of the objective function of $(GP)$ by means of the $\varepsilon$-subdifferential of the considered perturbation function can be found in \cite[Th.~4.1]{BGS}, under a stronger regularity condition. Its e-convex counterpart follows, extending \cite[Th.~11]{FVR2012} for general optimization problems.

\begin{theorem}\label{th54} Let $\Phi$ be e-convex. For all $\varepsilon \geq 0$ and all $x\in X$,
$$\partial _{c,\varepsilon}\Phi (\cdot, 0)(x)={\Pr}_{W} (\partial_{c,\varepsilon}\Phi (x, 0))$$
holds if and only if \eqref{eq:C5} is fulfilled.
\end{theorem}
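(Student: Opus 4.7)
The plan is to translate both sides of the claimed equality into epigraphical statements and then invoke the equivalence, provided by \cite[Lem.~5.3]{F2015} under the standing hypotheses, between \eqref{eq:C5} and the identity $\epi\Phi(\cdot,0)^c={\Pr}_{W\times\R}(\epi\Phi^c)$. The key reformulation, coming from the definition of the $\varepsilon$-$c$-subgradient together with that of the $c$-conjugate, is that $(x^*,u^*,\alpha)\in\partial_{c,\varepsilon}\Phi(\cdot,0)(x)$ is the same as $\langle x,u^*\rangle<\alpha$ together with $((x^*,u^*,\alpha),r)\in\epi\Phi(\cdot,0)^c$, where $r:=c(x,(x^*,u^*,\alpha))+\varepsilon-\Phi(x,0)$; and likewise $(x^*,u^*,\alpha)\in{\Pr}_W(\partial_{c,\varepsilon}\Phi(x,0))$ is equivalent to $\langle x,u^*\rangle<\alpha$ together with that \emph{same} point $((x^*,u^*,\alpha),r)$ lying in ${\Pr}_{W\times\R}(\epi\Phi^c)$.

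For the \emph{if} direction, the inclusion ${\Pr}_W(\partial_{c,\varepsilon}\Phi(x,0))\subseteq\partial_{c,\varepsilon}\Phi(\cdot,0)(x)$ is unconditional and follows from the inequality $\Phi(\cdot,0)^c\le\inf_{y^*,v^*\in Y^*}\Phi^c((\cdot,y^*),(\cdot,v^*),\cdot)$ already displayed in \eqref{E2}. For the reverse inclusion, I would start from $(x^*,u^*,\alpha)\in\partial_{c,\varepsilon}\Phi(\cdot,0)(x)$, pass to the epigraph via the reformulation above, apply \eqref{eq:C5} to produce $y^*,v^*\in Y^*$ with $\Phi^c((x^*,y^*),(u^*,v^*),\alpha)\le r$, and translate back to conclude $((x^*,y^*),(u^*,v^*),\alpha)\in\partial_{c,\varepsilon}\Phi(x,0)$, whose $W$-projection is the desired triple.

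For the \emph{only if} direction, one always has ${\Pr}_{W\times\R}(\epi\Phi^c)\subseteq\epi\Phi(\cdot,0)^c$ directly from the definitions (restrict the supremum defining $\Phi^c$ to $y=0$), so by \cite[Lem.~5.3]{F2015} it suffices to prove the reverse inclusion in order to recover \eqref{eq:C5}. Given $((x^*,u^*,\alpha),r)\in\epi\Phi(\cdot,0)^c$, the finiteness of $\Phi(\cdot,0)^c(x^*,u^*,\alpha)$ combined with the convention on $(+\infty)-(+\infty)$ forces the existence of some $x\in\dom\Phi(\cdot,0)$ with $\langle x,u^*\rangle<\alpha$. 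Setting $\varepsilon:=r+\Phi(x,0)-c(x,(x^*,u^*,\alpha))\ge 0$ places $(x^*,u^*,\alpha)$ in $\partial_{c,\varepsilon}\Phi(\cdot,0)(x)$; the assumed formula then produces $y^*,v^*\in Y^*$ for which the $\varepsilon$-$c$-subgradient inequality at $(x,0)$ reads precisely $\Phi^c((x^*,y^*),(u^*,v^*),\alpha)\le r$, as required.

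The main technical delicacy is this existence step: one has to pick a single $x$ that lies simultaneously in $\dom\Phi(\cdot,0)$ and in the open slice $\{\langle\cdot,u^*\rangle<\alpha\}$, a fact that is forced by $\Phi(\cdot,0)^c(x^*,u^*,\alpha)<+\infty$ but must be argued carefully through the extended-arithmetic conventions. Otherwise the argument is shorter than that of Theorem~\ref{th53}: no intersection over $\eta>0$ is needed, which is exactly why the stronger closedness-type hypothesis \eqref{eq:C5} on ${\Pr}_{W\times\R}(\epi\Phi^c)$ is the right assumption, rather than the weaker e$^{\prime}$-convexity of the inf-function that sufficed there.
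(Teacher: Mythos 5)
Your proposal is correct and follows essentially the same route as the paper's proof: both directions are handled by rewriting $\varepsilon$-$c$-subgradient membership as an epigraphical statement and invoking \cite[Lem.~5.3]{F2015} to identify \eqref{eq:C5} with $\epi\Phi(\cdot,0)^c={\Pr}_{W\times\R}(\epi\Phi^c)$, the only cosmetic difference being that for the ``if'' part the paper cites \cite[Prop.~5.4]{F2015} ($\Phi(\cdot,0)^c=\min_{y^*,v^*}\Phi^c((\cdot,y^*),(\cdot,v^*),\cdot)$) while you use the epigraph-projection identity directly, which amounts to the same thing. Your extra care in choosing $x\in\dom\Phi(\cdot,0)$ with $\langle x,u^*\rangle<\alpha$ (forced by the finiteness of $\Phi(\cdot,0)^c(x^*,u^*,\alpha)$) is a point the paper glosses over by taking an arbitrary $x\in\dom\Phi(\cdot,0)$, so no gap there.
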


\begin{proof} By \cite[Lem.~5.3]{F2015}, the condition \eqref{eq:C5} is equivalent to the inclusion $\epi((\Phi(\cdot, 0))^c)\subseteq {\Pr}_{W}\epi( \Phi^c)$ (since the opposite one is always valid). Note also that $\partial _{c,\varepsilon}\Phi (\cdot, 0)(x) \supseteq {\Pr}_{W} (\partial_{c,\varepsilon}\Phi (x, 0))$
in general.
To show the direct statement, take some $((x^*, u^*, \alpha), r)\in \epi(\Phi(\cdot, 0)^c)$ and $x\in \dom (\Phi (\cdot, 0))$, and let $\varepsilon= r+ \Phi (x, 0)- c(x, (x^*, u^*, \alpha)) \geq 0$.
Analogously to the proof of Theorem \ref{th53}, one obtains
$y^*_{\varepsilon}, v^*_{\varepsilon}\in Y^*$ for which $((x^*,y^*_{\varepsilon}), ( u^*, v^*_{\varepsilon}),$ $\alpha)\in \partial_{c,\varepsilon}\Phi (x, 0)$, i.e.
$$\Phi (x, 0) + \Phi^c((x^*, y^*_{\varepsilon}), (u^*, v^*_{\varepsilon}), \alpha) \leq c((x, 0), ((x^*, y^*_{\varepsilon}), (u^*, v^*_{\varepsilon}), \alpha)) + \varepsilon.$$ Employing the value of $\varepsilon$, one gets
$(((x^*, y^*_{\varepsilon}), (u^*, v^*_{\varepsilon}), \alpha), r)\in  \epi \Phi^c$, which yields \eqref{eq:C5}.

Now, let $\varepsilon \geq 0$ and $x\in X$. If $\Phi (x, 0)=+\infty$ one has $\partial _{c,\varepsilon}\Phi (\cdot, 0)(x)=\partial_{c,\varepsilon}\Phi (x, 0)= \emptyset$.
Let further $\Phi (x, 0)\in \R$. For $(x^*, u^*, \alpha), r)\in \partial_{c,\varepsilon}\Phi (\cdot, 0)(x)$, one has
$(\Phi(\cdot, 0))^c(x^*, u^*, \alpha) + \Phi (x, 0)\leq c(x, (x^*, u^*, \alpha)) + \varepsilon.$
By \cite[Prop.~5.4]{F2015}, \eqref{eq:C5} implies
$\Phi(\cdot, 0)^c =\min\limits_{y^*, v^*\in Y^*}\Phi^c ((\cdot, y^*), (\cdot, v^*), \cdot)$.
Consequently for every $(x^*, u^*, \alpha)\in W$ there are some $y^*, v^* \in Y^*$ such that
$$\Phi^c ((x^*, y^*), (u^*, v^*), \alpha) + \Phi (x, 0)\leq {\bar c} ((x,0), (x^*, y^*), (u^*, v^*), \alpha)+ \varepsilon,$$
i.e. $((x^*, y^*), (u^*, v^*), \alpha) \in \partial_{c,\varepsilon}\Phi (x, 0)$, which yields the conclusion.
\end{proof}

\section{Saddle-point theory on e-convex problems}
\label{se5}


In the classical setting there exists a connection between saddle-point theory and total duality.
This relation comes due to the fact that saddle-points can be characterized in terms of optimal solutions for the primal and the dual problem -- see \cite[Sect.~3.3]{ET1976}.
In the following we extend the definition of Lagrangian function and saddle-point theory into the application of the $c$-conjugation scheme.
The following definitions are the counterpart of Definitions 3.1 and 3.2 in \cite{ET1976}, respectively. For more on Lagrangian functions in the classical (convex) case we refer the reader to \cite[Sect.~3.3]{BGW}.

\begin{definition}\label{def:c-Lagrangian}
The function $L:X \times (Y^* \times Y^* \times \R_{++}) \en \Ramp$ defined by
\begin{equation*}
L(x, (y^*, v^*, \alpha) )=\inf_{y \in \ Y_x} \lbrace  \Phi( x,y)-c(y,  (y^*, v^*, \alpha)) \rbrace
\end{equation*}
where $Y_x=\dom \Phi(x, \cdot)$, for each $x \in X$,  is called \emph {the $c$-Lagrangian} of the problem $(GP)$ relative to $\Phi.$
\end{definition}
In the classical setting, the Lagrangian function of  $(GP)$ relative to $\Phi$, $L:X\times Y^*  \en \Ramp$,
 \begin{equation*}
L(x, y^* )=\inf_{y \in  Y} \lbrace  \Phi( x,y)-\ci y,y^*\cd \rbrace,
\end{equation*}
verifies that $L_x :Y^* \en \Ramp$, defined for all $x\in X$ by $L_x (y^*)=L(x,y^*)$, is a concave and upper semicontinuous function.
Nevertheless, the function $L_{y^*} :X \en \Ramp$ given by $L_{y^*}(x)=L(x,y^*)$ for all $y^* \in Y^*$ is convex when $\Phi$ is convex -- see \cite[Sect.~3.3]{ET1976}.
In our context, the function $L_x :Y^* \times Y^* \times \R_{++}\en \Ramp$, which is defined for all $x\in X$ by
\begin{equation*}	
	L_x (y^*, v^*, \alpha)=L(x,(y^*, v^*, \alpha)),
\end{equation*}
verifies that $-L_x=\Phi(x,\cdot)^{c}$ and it is e$^{\prime}$-convex.
However, we can not guarantee that for all $(y^*, v^*, \alpha) \in Y^* \times Y^* \times \R_{++}$, the function $L_{(y^*, v^*, \alpha)} :X \en \Ramp$ given by $L_{(y^*, v^*, \alpha)}(x)=L(x,(y^*, v^*, \alpha)) = -\Phi(x,\cdot)^{c}(y^*, v^*, \alpha)$ is convex when $\Phi$ is so.

\begin{example}
Let us consider $\left( P\right) ~ \inf_{x\in \R}\left\{ f\left( x\right) +g\left( Ax\right)\right\} $, where $f:\R \en \R$, $g:\R \en \Ramp$ and $A:\R \en \R$ are defined as
\begin{equation*}
f(x)=x, ~ g(y)=\delta_{]-\infty, 0]} (y), ~ A(z)=z,
\end{equation*}
with the perturbation function $\Phi(x,y)= f(x)+g(Ax+y)$.
Take the point $(y^*, v^*, \alpha)=(1,1,1) \in \R ^3$ and let us calculate $L_{(1,1,1)}:\R \en \Ramp$.
\begin{equation*}
L_{(1,1,1)}(x)=\inf_{y \leq -x}\lbrace{x-c(y,(1,1,1))} \rbrace=\left\{
\begin{array}{ll}
\inf_{y \leq -x}\lbrace {x-y}\rbrace & \text{if }x>-1 , \\
-\infty & \text{otherwise.}
\end{array}
\right.
\end{equation*}
Taking into account that, if $x>-1$ and $y \leq -x$ then $x-y \geq ~ x-1 >-2$, we conclude
\begin{equation*}
L_{(1,1,1)}(x)=\left\{
\begin{array}{ll}
-2 & \text{if }x>-1 , \\
-\infty & \text{otherwise},
\end{array}
\right.
\end{equation*}
which is not convex.
\end{example}
\begin{definition}
A point $(\bar x, (\bar y^*, \bar v^*, \bar \alpha))\in X \times ( Y^* \times Y^* \times \R_{++})$ is called a \emph{saddle-point} of $L$ if
\begin{equation*}
L(\bar x,  (y^*, v^*, \alpha)) \leq L(\bar x, (\bar y^*, \bar v^*, \bar \alpha) ) \leq L( x,  (\bar y^*, \bar v^*, \bar \alpha)),
\end{equation*}
holds for all  $ x \in X$ and $ ( y^*, v^*,  \alpha)\in  Y^* \times Y^* \times \R_{++}.$
\end{definition}
The $c$-Lagrangian of the problem $(GP)$ relative to $\Phi$ is related to both optimal values in the following way.


\begin{proposition}
One always has
\begin{eqnarray}
v(GD_c) & = \sup\limits_{\substack{y^*, v^* \in Y^*\\ \alpha >0}} \inf\limits_{x \in X} L(x, (y^*, v^*, \alpha)). \label {ecuacion 16}
\end{eqnarray}
If $\Phi(x, \cdot)$ is e-convex for all $x \in X$, it also holds
\begin{eqnarray}
v(GP) & =  \inf\limits_{x \in X}\sup\limits_{\substack{y^*, v^* \in Y^*\\ \alpha >0}}  L(x, (y^*, v^*, \alpha)). \label {ecuacion 17}
\end{eqnarray}
\end{proposition}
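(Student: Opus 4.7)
The first move is to record the pointwise identity $L(x,(y^*,v^*,\alpha)) = -\Phi(x,\cdot)^c(y^*,v^*,\alpha)$, valid for every $x\in X$ and $(y^*,v^*,\alpha)\in Y^*\times Y^*\times\R_{++}$; this is essentially already noted in the paragraph preceding the proposition (where $-L_x=\Phi(x,\cdot)^c$ is stated). I would quickly verify it via a case split on whether $\langle y,v^*\rangle<\alpha$, using the paper's conventions on $(+\infty)-(+\infty)$ to see that restricting the infimum in $L$ to $Y_x$ does not alter $\inf_y\{\Phi(x,y)-c(y,(y^*,v^*,\alpha))\}$. For \eqref{ecuacion 16}, taking the infimum in $x$ in this identity and observing that $\bar c((x,y),((0,y^*),(0,v^*),\alpha))$ pairs $x$ only with $0$ (hence does not depend on $x$) yields $\Phi^c((0,y^*),(0,v^*),\alpha)=\sup_{x}\Phi(x,\cdot)^c(y^*,v^*,\alpha)$ and therefore $\inf_x L(x,(y^*,v^*,\alpha)) = -\Phi^c((0,y^*),(0,v^*),\alpha)$. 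A supremum over $(y^*,v^*,\alpha)\in Y^*\times Y^*\times\R_{++}$ then delivers $v(GD_c)$.

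For \eqref{ecuacion 17}, the e-convexity of $\Phi(x,\cdot)$ combined with Theorem \ref{th:Theorem2.2}(iii) yields $\Phi(x,0)=\Phi(x,\cdot)^{cc'}(0)$, which unfolds to $\sup_{(y^*,v^*,\alpha)\in W_Y}\{c'((y^*,v^*,\alpha),0) - \Phi(x,\cdot)^c(y^*,v^*,\alpha)\}$. Direct evaluation of $c'((y^*,v^*,\alpha),0)=c(0,(y^*,v^*,\alpha))$ via \eqref{equation: def c and c prime} gives $0$ for $\alpha>0$ and $+\infty$ for $\alpha\leq 0$, so I split the supremum accordingly. The $\alpha>0$ portion equals $\sup_{\alpha>0}\{-\Phi(x,\cdot)^c(y^*,v^*,\alpha)\} = \sup_{\alpha>0}L(x,(y^*,v^*,\alpha))$ by the identity, while for $\alpha\leq 0$ plugging $y=0\in\dom\Phi(x,\cdot)$ into the supremum defining $\Phi(x,\cdot)^c$ forces that conjugate to $+\infty$, so the convention $(+\infty)-(+\infty)=-\infty$ makes the $\alpha\leq 0$ part contribute nothing. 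Hence $\Phi(x,0)=\sup_{\alpha>0}L(x,(y^*,v^*,\alpha))$, and infimizing over $x$ produces \eqref{ecuacion 17}.

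The main delicacy is the $\alpha\leq 0$ analysis in the previous paragraph: the reduction to $-\infty$ relies on having $0\in\dom\Phi(x,\cdot)$, so the argument runs cleanly for $x$ with $\Phi(x,0)$ finite. Points with $\Phi(x,0)=+\infty$ do not affect $v(GP)=\inf_x\Phi(x,0)$, and I would handle them by restricting attention, both on the left- and right-hand sides of \eqref{ecuacion 17}, to $x\in\dom\Phi(\cdot,0)$ before taking the infimum.
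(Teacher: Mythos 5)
Your treatment of \eqref{ecuacion 16} is correct and is essentially the paper's own argument: both reduce $-\Phi^c((0,y^*),(0,v^*),\alpha)$ to $\inf_{x\in X}L(x,(y^*,v^*,\alpha))$ (via the identity $L_x=-\Phi(x,\cdot)^c$ and the fact that $\bar c$ does not see $x$ when $x^*=u^*=0$) and then take suprema. For \eqref{ecuacion 17} you also follow the paper's route: biconjugation of $\Phi(x,\cdot)$ at $0$ and discarding the $\alpha\le 0$ portion of the supremum, and you correctly locate the delicate point in the case $0\notin\dom\Phi(x,\cdot)$. However, your proposed repair -- restricting the infimum on the right-hand side of \eqref{ecuacion 17} to $x\in\dom\Phi(\cdot,0)$ -- is precisely the step that needs justification, and it cannot be justified in general: for $x$ with $\Phi(x,0)=+\infty$ the quantity $\sup_{\alpha>0,\,y^*,v^*}L(x,(y^*,v^*,\alpha))$ need not be $+\infty$ and can even lie strictly below $v(GP)$, in which case removing such $x$ changes the value of the infimum. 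Concretely, take $X=Y=\R$, $\Phi(x,\cdot)=\delta_{(0,1]}$ for $x\le 0$ and $\Phi(x,\cdot)=5+\delta_{[0,1]}$ for $x>0$; every slice is proper and e-convex and $0\in\Pr_Y(\dom\Phi)$, yet for $x\le 0$ one has $L(x,(y^*,v^*,\alpha))=-\sup_{y\in(0,1]}c(y,(y^*,v^*,\alpha))\le 0$ (since $\langle y,y^*\rangle\to 0$ as $y\to 0^+$), with the value $0$ attained at $(y^*,v^*,\alpha)=(0,0,1)$, so $\inf_{x\in X}\sup_{\alpha>0}L(x,\cdot)=0$ while $v(GP)=5$. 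The missing ingredient behind your restriction, namely $\sup_{\alpha>0}L(x,\cdot)\ge v(GP)$ for $x\notin\dom\Phi(\cdot,0)$, is therefore false in general, and this is a genuine gap.

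To be fair, the paper's own proof glosses over the same point: the passage from the supremum over $\alpha\in\R$ to the supremum over $\alpha>0$ tacitly uses $\Phi(x,\cdot)^c(y^*,v^*,\alpha)=+\infty$ for all $\alpha\le 0$, which is guaranteed by plugging in $y=0$ only when $\Phi(x,0)<+\infty$; in the slice $\delta_{(0,1]}$ above one has $\Phi(x,\cdot)^c(0,-1,0)=0$, so \eqref{ecuacion_19} fails at such $x$. The clean way to close the argument -- for your write-up and for the paper's -- is to assume $0\in\dom\Phi(x,\cdot)$ for every $x\in X$ (equivalently $\dom\Phi(\cdot,0)=X$), or to state \eqref{ecuacion 17} with the infimum taken over $\dom\Phi(\cdot,0)$; under either reading your computation, which is otherwise the same as the paper's, goes through.
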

\begin{proof}

Take any point $((x^*, y^*), (u^*, v^*), \alpha) \in (X^* \times Y^*) \times (X^* \times Y^*) \times \R_{++}$, then
\begin{equation*}
\Phi ^{c} ((x^*, y^*), (u^*, v^*), \alpha)=\sup_{(x,y) \in \dom \Phi} \lbrace c((x,y), (x^*, y^*), (u^*, v^*), \alpha) - \Phi (x,y) \rbrace \\
\end{equation*}
and letting $x^*=0, u^*=0$,
\begin{align*}
-\Phi ^{c} ((0, y^*),(0,  v^*), \alpha) &=\inf_{(x,y) \in \dom \Phi} \lbrace \Phi (x,y)-c(y,  (y^*, v^*, \alpha)) \rbrace \\
&= \inf_{x \in X}\lbrace { \inf_{y \in  Y_{x}} \lbrace { \Phi (x,y)-c(y,  (y^*, v^*, \alpha))}} \rbrace.
\end{align*}
Then
\begin{equation}
-\Phi ^{c} ((0, y^*),(0,  v^*), \alpha)=     \inf_{x \in  X} L(x, (y^*, v^*, \alpha))   \label {ecuacion_18}
\end{equation}
and \eqref{ecuacion 16} holds.
On the other hand, since  $\Phi(x,\cdot)$ is e-convex, we have
\begin{align*}
\Phi (x,0)&=\inf_{x \in X} \Phi(x,\cdot)^{cc^{\prime}}(0)\\
&= \sup_{\substack{y^*, v^* \in Y^*\\ \alpha\in \R}} \lbrace c^{\prime} ((y^*, v^*, \alpha), 0)-\Phi(x,\cdot)^c (y^*, v^*, \alpha) \rbrace \\
&=  \sup_{\substack{y^*, v^* \in Y^*\\ \alpha\in \R}} \lbrace c^{\prime} ((y^*, v^*, \alpha), 0)-\sup_{y \in Y_x}\lbrace {c(y,(y^*, v^*, \alpha)-\Phi (x,y)}\rbrace \rbrace \\
&= \sup_{\substack{y^*, v^* \in Y^*\\ \alpha>0}} \lbrace c^{\prime} ((y^*, v^*, \alpha), 0)+  L(x, (y^*, v^*, \alpha) ) \rbrace.
\end{align*}
We obtain
\begin{equation}
\Phi (x,0)= \sup_{\substack{y^*, v^* \in Y^*\\ \alpha>0}}  L(x, (y^*, v^*, \alpha) ) \label{ecuacion_19}
\end{equation}
and \eqref{ecuacion 17} also holds.
\end{proof}


\begin{proposition} \label {proposition 5.5}
Let us assume that $\Phi (x, \cdot)$ is e-convex, for all $x \in X$.
Then, $(\bar x, (\bar y^*, \bar v^*, \bar \alpha))\in X \times ( Y^* \times Y^* \times \R_{++})$ is a saddle-point of $L$ if and only if $\bar x$ is a optimal solution of  $(GP), ~(\bar y^*, \bar v^*, \bar \alpha)$ is a optimal solution of $(GD_c)$ and $v(GP)=v(GD_c) \in \R$.
\end{proposition}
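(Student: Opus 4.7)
The plan is to leverage directly the two key identities \eqref{ecuacion_18} and \eqref{ecuacion_19} obtained in the preceding proposition, which respectively express $-\Phi^{c}((0,\cdot),(0,\cdot),\cdot)$ as an infimum of $L$ over $X$ and (under the standing e-convexity assumption on $\Phi(x,\cdot)$) $\Phi(x,0)$ as a supremum of $L$ over $Y^*\times Y^*\times \R_{++}$. Combining these with weak duality will give the equivalence with no technical overhead.

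For the direct implication, I would assume $(\bar x,(\bar y^*,\bar v^*,\bar\alpha))$ is a saddle-point of $L$ and chain:
\begin{equation*}
\Phi(\bar x,0) = \sup_{\substack{y^*,v^*\in Y^*\\ \alpha>0}} L(\bar x,(y^*,v^*,\alpha)) = L(\bar x,(\bar y^*,\bar v^*,\bar \alpha)) = \inf_{x\in X} L(x,(\bar y^*,\bar v^*,\bar\alpha)) = -\Phi^{c}((0,\bar y^*),(0,\bar v^*),\bar\alpha),
\end{equation*}
where the outer equalities come from \eqref{ecuacion_19} and \eqref{ecuacion_18}, and the inner two from the saddle-point inequalities. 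Weak duality $v(GD_c)\leq v(GP)$ together with
\begin{equation*}
v(GP)\leq \Phi(\bar x,0) = -\Phi^{c}((0,\bar y^*),(0,\bar v^*),\bar\alpha)\leq v(GD_c)
\end{equation*}
forces all inequalities to be equalities, with the common value finite, yielding the solvability and duality gap assertions.

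For the converse, I would start from $\bar x$ solving $(GP)$, $(\bar y^*,\bar v^*,\bar\alpha)$ solving $(GD_c)$ with $v(GP)=v(GD_c)\in\R$, so that $\Phi(\bar x,0)=-\Phi^{c}((0,\bar y^*),(0,\bar v^*),\bar\alpha)$. Using again \eqref{ecuacion_18} and \eqref{ecuacion_19} this rewrites as
\begin{equation*}
\sup_{\substack{y^*,v^*\in Y^*\\ \alpha>0}} L(\bar x,(y^*,v^*,\alpha)) = \inf_{x\in X} L(x,(\bar y^*,\bar v^*,\bar\alpha)).
\end{equation*}
Sandwiching the common value between $L(\bar x,(\bar y^*,\bar v^*,\bar\alpha))$ from above (as an element of the left-hand supremum) and from below (as an element of the right-hand infimum) then forces the three quantities to coincide, which is exactly the saddle-point condition.

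The only real subtlety I anticipate is verifying that the dual point $(\bar y^*,\bar v^*,\bar\alpha)$ lies in the admissible set for the saddle-point definition, namely that $\bar\alpha>0$; this comes for free from the feasibility of $(\bar y^*,\bar v^*,\bar\alpha)$ in $(GD_c)$. A secondary care point is the use of \eqref{ecuacion_19}, which requires the e-convexity of $\Phi(x,\cdot)$ only at the specific point $\bar x$, but since it is assumed for all $x\in X$ this is immediate. No other difficulties are expected.
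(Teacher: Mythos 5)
Your proposal is correct and follows essentially the same route as the paper: both directions rest on the identities \eqref{ecuacion_18} and \eqref{ecuacion_19} relating $L$ to $-\Phi^c((0,\cdot),(0,\cdot),\cdot)$ and $\Phi(\cdot,0)$, combined with the weak-duality sandwich. The only cosmetic difference is that the paper finishes the forward direction by invoking Lemma \ref{lemma 5} and Proposition \ref{proposition 4.3} (whose proof is exactly your weak-duality sandwich), whereas you inline that argument directly.
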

\begin{proof}
Let $(\bar x, (\bar y^*, \bar v^*, \bar \alpha))\in X \times ( Y^* \times Y^* \times \R_{++})$ be a saddle-point of $L$.
Taking into account \eqref{ecuacion_18} and \eqref{ecuacion_19}, we obtain
\begin {eqnarray*}
-\Phi ^{c} ((0, \bar y^*),(0,  \bar v^*), \bar \alpha)) &=& \inf_{x \in X}  L(x, (\bar y^*, \bar v^*, \bar \alpha) ) = L(\bar x, (\bar y^*, \bar v^*, \bar \alpha) )\\
&=&\sup_{y^*, v^* \in Y^*, \alpha>0}  L(\bar x, (y^*, v^*, \alpha) )=\Phi (\bar x, 0).
\end{eqnarray*}
Then, in particular $\Phi(\bar x, 0) +\Phi ^{c} ((0,\bar y^*),(0,\bar v^*),\bar \alpha) \leq 0$ and, applying Lemma \ref{lemma 5}, we have that $((0,\bar y^*),(0,\bar v^*),\bar \alpha)$ $\in \partial_{c} \Phi (\bar x, 0)$, which finishes the proof in virtue of Proposition \ref{proposition 4.3}.

For the converse statement, since $v(GP)=\Phi (\bar x,0)$, $v(GD_c)=-\Phi ^{c} ((0,\bar y^*),$ $(0,\bar v^*),\bar \alpha)$ and $v(GP)=v(GD_c) \in \R$ we obtain, using \eqref{ecuacion_18} and \eqref{ecuacion_19} that
\begin{equation*}
 L(\bar x, (y^*, v^*, \alpha) ) \leq \Phi (\bar x,0)=-\Phi ^{c} ((0,\bar y^*),(0,\bar v^*),\bar \alpha) \leq  L(x, (\bar y^*, \bar v^*, \bar \alpha) ).
\end{equation*}
Hence, $\Phi (\bar x,0)= L(\bar x, (\bar y^*, \bar v^*, \bar \alpha) )$ and $(\bar x, (\bar y^*, \bar v^*, \bar \alpha) )$ is a saddle-point of $L$.
\end{proof}
\begin{remark}
In a more general framework, Penot and Rubinov in \cite{PR2005} related the Lagrangian and the pertubational approach (or parametrization approach, as it is named in that paper) to duality for optimization problems. In order to allow a more comprehensive comparison between their work and ours, we have adapted their notation to the one used in this work.
Given a set $Z$, a Lagrangian for $(GP)$ is a function $L:X\times Z \en \Ramp$ which must verify that $F( \cdot)=\sup_{z \in Z}L(\cdot,z)$, in which case, the optimal value of $(GP)$ satisfies
\begin{equation*}
 v(GP)=\inf_{x \in X} \sup_{z \in Z} L(x,z).
\end{equation*}
Note that no convexity or topological assumptions were imposed on the involved functions in this case. Defining a dual functional for $(GP)$, as $d_L(z)=\inf_{x \in X}L(x,z)$, for every $z \in Z$, a dual problem for $(GP)$ is
\begin{equation*}
 (GD_{L}) ~ \sup_{z \in Z} d_L(z),
\end{equation*}
and for this primal-dual pair of optimization problems there holds weak duality. According to \cite[Prop.~1 (Sect.~3.3)]{PR2005}, if $\Phi:X \times Y \en \Ramp$ is a perturbation function for $(GP)$, and, for all $x \in X$, $\Phi (x, \cdot)$ is $H_c$-convex at $0$ (see \cite[Sect.~2]{PR2005} for a definition), then
\begin{equation} \label{defLagraPenot}
L(x,z)=\inf_{y \in Y} \lbrace { \Phi(x,y)-c(y,z)} \rbrace
\end{equation}
is a Lagrangian for $(GP)$.
Here $c: Y\times Z \en \Ramp$ is any coupling function. As it can be observed, the function $L$ in Definition \ref{def:c-Lagrangian} is not the same as (\ref{defLagraPenot}): if we take the infimum on $Y$, the function in Definiton \ref{def:c-Lagrangian} is always $-\infty$, except perhaps for points  $(\bar x, (0, 0, \bar \alpha))\in X \times ( Y^* \times Y^* \times \R_{++})$, because of the special structure of the coupling functions we considered.
\end{remark}

\section{Final remarks, conclusions and future work}
\label{section:Conclusions}

In this paper we present new results regarding evenly convex (e-convex) functions, in particular converse and total duality statements for e-convex problems, that extend their counterparts from the (classical) convex case. Other results can be generalized to the current setting as well, for instance the $\varepsilon$-duality statements from \cite{GRA},
however the proofs work straightforwardly and present no difficulty so we opted not to include them here. On the other hand, some results known at the moment for proper, convex and lower semicontinous functions, such as the maximal monotonicity of their subdifferentials or the fact that their proximal point operators are single-valued, do not hold in general for e-convex functions -- check for instance the function considered in \cite[Ex.~2.1]{FajardoVidal2017}.

We extend in this article the notion of converse duality from the convex setting to e-convex optimization problems, providing a sufficient closedness-type regularity condition for it and an alternative formulation using the infimum value function. In order to prove the mentioned results we introduced the notion of functionally representable functions and we also gave a new Moreau-Rockafellar type result for e$^{\prime}$-convex functions. We introduced the concept of the $c^{\prime}$-subdifferential of a function,
providing novel characterizations of the elements of $c$-subdifferentials and $c^{\prime}$-subdifferentials, respectively, and studying how total duality is connected with them.
Formulae for the $c$-subdifferential and biconjugate of the objective function of a given general optimization problem are provided, too.
On the other hand, we extend the definition of the classical Lagrangian towards the e-convex setting by means of the $c$-conjugation scheme and relate the corresponding saddle-points to total duality. The results for general optimization problems can be specialized for constrained and unconstrained optimization problems as well.

We have restricted ourselves to closedness type regularity conditions, but taking into consideration \cite[Sect.~3]{FV2016SSD} one can alternatively provide interiority type ones to the same end as well. Using the connection between the $c$- and $c^{\prime}$-subdifferentials and the notion of $c$-conjugation, other paths can be followed along this direction. On the other hand, the investigations from \cite{VPV} on subdifferentials of e-convex functions can be continued in the vein of this paper, too. Last but not least other properties of proper, convex and lower semicontinous functions and formulae involving them could be extendable to the e-convex setting.\\


\textbf{Acknowledgements.} Research partially supported by MINECO of Spain and ERDF of EU, Grant MTM2014-59179-C2-1-P, Austrian Science Fund (FWF), Project M-2045, and German Research Foundation (DFG), Project GR3367/4-1. The work of S.-M. Grad was partially carried out during a research visit at the Erwin Schr\"odinger International Institute for Mathematics and Physics in Vienna in the framework of the Programme Modern Maximal Monotone Operator Theory\_LFB\_2019

\bibliographystyle{plain}
\bibliography{biblio}

\end{document}